\theoremstyle{plain} 
\newtheorem{thm}{Theorem}[section] 
\newtheorem{cor}[thm]{Corollary} 
\newtheorem{lem}[thm]{Lemma}
\theoremstyle{definition}
\newcommand{\wt}{\widetilde}
\renewcommand{\phi}{\varphi}
\numberwithin{equation}{section}
\email{danymastro93@hotmail.it}
\begin{document}

\begin{center}
\textbf{{\Large Weighted Average Number of Prime $m$-tuples}}
\end{center}
\begin{center}
\textbf{{\Large lying on an Admissible $k$-tuple of Linear Forms}}
\end{center} \ \
\begin{center}
{\large Daniele Mastrostefano}
\end{center}\ 
\\
\begin{center}
\textbf{Abstract}
\end{center}
In this paper we find an upper bound for the sum $\sum_{x<n\leq 2x}\textbf{1}_{\mathbb{P}}(n+h_{i_{1}})\cdots\textbf{1}_{\mathbb{P}}(n+h_{i_{m+1}})w_{n}$, where $(h_{i_{1}},...,h_{i_{m+1}})$ is any $(m+1)$-tuple of elements in the admissible set $\mathcal{H}=\{h_{1},...,h_{k}\}$, $m\geq 1$ and $x$ is sufficiently large, with the same weights $w_{n}$ used in the Maynard's paper ``Dense clusters of primes in subsets''. The estimate will be uniform over positive integer $k$ with $m+1\leq k\leq (\log x)^{1/5}$ and on admissible set $\mathcal{H}$ with $0\leq h_{1}<...<h_{k}\leq x$. Moreover, we make explicit the dependence on $m$. The upper bound will depend on an integral of a smooth function and on the singular series of $\mathcal{H}$, which naturally arises in this context.\\
\\
\\
\section{Introduction}
Let us fix $\mathbb{P}$ the set of prime numbers, $k$ a positive integer and $\mathcal{H}=\{h_{1},...,h_{k}\}$ an admissible set. In the paper ``Small gaps between primes''\cite{M1}, Maynard proved that there exist infinitely many bounded intervals containing at least $m\geq 1$ primes, showing that the weighted sum 
\begin{equation}
\label{eq: 1.1}
S=\sum_{x<n\leq 2x}\left(\sum_{i=1}^{k}\textbf{1}_{\mathbb{P}}(n+h_{i})-(m-1)\right)w_{n}
\end{equation}
is positive, when $x$ is sufficiently large. Here $\textbf{1}_{\mathbb{P}}(n)$ is the characteristic function of the set of prime numbers and $w_{n}$ are chosen as non-negative smooth $k$-dimensional Selberg sieve weights. In fact, if $S>0$, there must exist an integer $n\in [x,2x]$ such that the corresponding factor in parentheses is positive, which is equivalent to say that at least $m$ prime numbers lye on the translates $n+h_{1},...,n+h_{k}$. Since $\mathcal{H}$ is fixed and we can vary $x$, we obtain the aforementioned result.

In the subsequent paper ``Dense clusters of primes in subsets''\cite{M2}, Maynard proved that a uniform version of the sieve method \eqref{eq: 1.1} can lead to improve the above result, finding a lower bound on the number of integers $n\in [x,2x]$ for which there are at least $m$ primes among $n+h_{1},...,n+h_{k}$. More specifically, he showed that 
$$\#\{n\in [x,2x]: \#(\{n+h_{1},...,n+h_{k}\}\cap \mathbb{P})\geq m\}\gg_{k} \frac{x}{\log^{k}x}.$$
This is \cite[Theorem 3.1]{M2}. The estimate holds with some uniformity on the parameters $m$ and $k$ and on the admissible set $\mathcal{H}$, which vary with $x$ in certain ranges.

In order to obtain such result, Maynard estimated various sums involved in a generalization on the sieve method (1.1), which is essentially of the following form
\begin{equation}
\label{eq: 1.2}
S'=\sum_{x<n\leq 2x}\left(\sum_{i=1}^{k}\textbf{1}_{\mathbb{P}}(n+h_{i})-(m-1)-k\textbf{1}_{\mathcal{B}}(n)\right)w_{n},
\end{equation}
where $\textbf{1}_{\mathcal{B}}$ is the characteristic function of the set of integers $\mathcal{B}$. Maynard gave estimates on these new particular sums in \cite[Proposition 6.1]{M2}. To find a lower bound for $S'$, useful for application, he needed to obtain a lower bound on the weighted average number of primes lying on the admissible set $\mathcal{H}$. Indeed, he showed that 
\begin{equation}
\label{eq: 1.3}
\sum_{x<n\leq 2x} \textbf{1}_{\mathbb{P}}(n+h)w_{n}(\mathcal{H})\gg\left(\frac{B}{\phi(B)•}\right)^{k-1}\mathfrak{S}_{B}(\mathcal{H})x(\log x)^{k}\frac{\log k}{k}I_{k}(F),
\end{equation}
for every $h\in\mathcal{H}$ and where $B$ is a suitable positive integer. Here $\mathfrak{S}_{B}(\mathcal{H})$ is the singular series associated with the set $\mathcal{H}$ and 
$$I_{k}(F)=\int_{0}^{\infty}\cdots\int_{0}^{\infty} F^{2}(t_{1},...,t_{k})dt_{1}\cdots dt_{k},$$
with $F$ a smooth function $F:\mathbb{R}^{k}\rightarrow \mathbb{R}$ depending only on $k$.

The aim of our paper is to find an upper bound for the generalization of the sum \eqref{eq: 1.3} to $m$-tuples of primes. For any admissible set of linear functions $\mathcal{L}=\{L_{1}(n),...,L_{k}(n)\}$ $=\{n+h_{1},...,n+h_{k}\}$ and for every prime numbers $p$ and integer $B$, let us define the function $\omega(p)$ as
$$\omega(p)= \left\{ \begin{array}{ll}
        \#\{1\leq n\leq p: \prod_{i=1}^k L_{i}(n)\equiv 0\pmod{p}\} & \mbox{if $p\nmid B$};\\
        0 & \mbox{$p|B$}.\end{array} \right. $$
Moreover, we define for every integer $D$ the singular series attached to $\mathcal{L}$ as
$$\mathfrak{S}_D(\mathcal{L})=\prod_{p\nmid D}\Bigl(1-\frac{\omega(p)}{p}\Bigr)\Bigl(1-\frac{1}{p}\Bigr)^{-k}.$$
From the admissibility of $\mathcal{L}$ and the definition of $\omega(p)$ it follows that $\mathfrak{S}_D(\mathcal{L})$ converges.

Finally, we put $W=\prod_{p\leq 2k^{2}, p\nmid B}p$ and we let $W_{1},...,W_{k}$ to be square-free integers each a multiple of $WB$, such that any prime $p\nmid WB$ divides exactly $k-\omega(p)$ of them. Now, we can state the main result.
\begin{thm}
\label{thm: 1.1} 
For any $m\geq 1$ and $0\leq h_{1}<...<h_{k}\leq x$, and if $k$ is a sufficiently large positive integer with $m+1\leq k\leq (\log x)^{1/5}$, we have 
\begin{equation}
\label{eq: 1.4}
\sum_{x<n\leq 2x} \mathbf{1}_{\mathbb{P}}(L_{i_{1}}(n))\cdots \mathbf{1}_{\mathbb{P}}(L_{i_{m+1}}(n))w_{n}\leq C^{m}\bar{y}^{2}\frac{\varphi(W)^{k-1}}{W^{k-1}}\frac{(WB)^{k-2m-1}}{\varphi(WB)^{k-2m-1}}\mathfrak{S}_{B}(\mathcal{L})\frac{(\log k)^{m+1}}{k^{m+1}}I_{k}(F)x(\log x)^{k},
\end{equation}
for any $1\leq i_{1}<...<i_{m+1}\leq k$, for a certain constant $C>0$ and for any $x$ large enough. Here, we take $B$ as a suitable integer such that $B/\phi(B)=1+o(1)$ and we let $\bar{y}$ to be 
$$\bar{y}= (3m)^{m}\left(\frac{(W_{i_{1}},...,W_{i_{m}})}{\phi((W_{i_{1}},...,W_{i_{m}}))}\right)^{m}\prod_{p:\ n(p)\geq 1}\left(1+\frac{n(p)}{p-1}\right)^{-1}\left(1-\frac{1}{p•}\right)^{-m},$$
where $n(p)=\#\{j\in\{1,...,m\}: p\nmid W_{i_{j}}\}$. Moreover, $I_{k}(F)$ is defined as above and the weights $w_{n}$ are the same used in \cite{M2}.
\end{thm}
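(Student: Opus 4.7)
The plan is to adapt the proof of \cite[Proposition 6.1]{M2} (the analogue with $m=0$) to the case of $m+1$ simultaneously forced primes on $\mathcal{L}$. The starting point is to expand the weight as $w_n = \bigl(\sum_{d_i \mid L_i(n)} \lambda_{d_1,\ldots,d_k}\bigr)^2$ and to swap summation orders, producing a double sum over divisor tuples $\mathbf{d}, \mathbf{e}$ weighted by $\lambda_{\mathbf{d}}\lambda_{\mathbf{e}}$ of the inner count
$$\#\bigl\{x<n\leq 2x : L_{i_j}(n)\in\mathbb{P}\ \text{for } j=1,\ldots,m+1,\ \text{and}\ [d_i,e_i]\mid L_i(n)\ \forall i\bigr\}.$$
On the support of $\lambda_{\mathbf{d}}$ (where $(d_i, W)=1$ and $\prod_i d_i \leq R \leq x^{1/2-\delta}$), I would evaluate this count by a tuple version of the Bombieri--Vinogradov / Siegel--Walfisz estimate, obtaining a main term of the shape
$$\frac{x\,\mathfrak{S}_B(\mathcal{L})}{(\log x)^{m+1}\prod_{j=1}^{m+1}\varphi([d_{i_j},e_{i_j}])\prod_{i\notin\{i_1,\ldots,i_{m+1}\}}[d_i,e_i]}$$
up to local Euler corrections, together with an error that is controlled after summation against $|\lambda_{\mathbf{d}}\lambda_{\mathbf{e}}|$ by the usual level-of-distribution argument.

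Next I would substitute the Selberg-type identity
$$\lambda_{d_1,\ldots,d_k} = \Bigl(\prod_i \mu(d_i) d_i\Bigr) \sum_{\substack{r_1,\ldots,r_k \\ d_i\mid r_i\ \forall i}} \frac{y_{r_1,\ldots,r_k}}{\prod_i \varphi(r_i/d_i)}$$
and perform the Euler-product simplification of the resulting sum over $\mathbf{d},\mathbf{e}$ and then over $\mathbf{r},\mathbf{s}$. After the cross-multiplication collapses onto the diagonal $r_i = s_i$, one is left with a sum of the form $\sum_\mathbf{r} (y^{\ast}_\mathbf{r})^2 / \prod_i g(r_i)$ with modified smooth weights $y^\ast_\mathbf{r}$ supported on tuples satisfying $r_{i_j} = 1$ for every $j=1,\ldots,m+1$. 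The $m+1$ fixed coordinates each contribute a factor $\log k / k$, as in the $m=0$ analysis of \cite{M2}, which accounts for the term $(\log k / k)^{m+1}$ in \eqref{eq: 1.4}; the remaining $k - m - 1$ coordinates convert the sum into the integral $I_k(F)$ by the usual smooth-summation procedure, and the Euler products rearrange into $\mathfrak{S}_B(\mathcal{L})$ together with the factors $\varphi(W)^{k-1}/W^{k-1}$ and $(WB/\varphi(WB))^{k-2m-1}$.

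The main obstacle will be extracting the uniform bound $y^\ast_\mathbf{r} \leq \bar{y}$ while tracking the explicit dependence on $m$. The strategy is to group primes $p \nmid WB$ according to how many of $W_{i_1},\ldots,W_{i_m}$ they divide (encoded by $n(p)$): primes dividing all of them yield the factor $((W_{i_1},\ldots,W_{i_m})/\varphi(\cdot))^m$, primes dividing only some of them give the Euler product $\prod_{p: n(p)\geq 1}(1+n(p)/(p-1))^{-1}(1-1/p)^{-m}$, and the combinatorial prefactor $(3m)^m$ emerges from counting the partitions of such primes among the $m$ coordinates. Any residual dependence on the specific indices $i_1,\ldots,i_{m+1}$ is absorbed into the constant $C^m$, and the final passage from the sum over $\mathbf{r}$ to $I_k(F)$, together with the extraction of $x(\log x)^k$, follows the lines of \cite{M2}.
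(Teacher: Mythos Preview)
Your proposal has a genuine gap at the very first analytic step. You plan to evaluate the inner count
\[
\#\bigl\{x<n\le 2x:\ L_{i_j}(n)\in\mathbb{P}\ (1\le j\le m{+}1),\ [d_i,e_i]\mid L_i(n)\ \forall i\bigr\}
\]
by ``a tuple version of the Bombieri--Vinogradov / Siegel--Walfisz estimate'' and extract a main term of size $x\,\mathfrak{S}_B(\mathcal{L})/(\log x)^{m+1}$. No such theorem is available: an asymptotic for the number of $n$ with $m{+}1$ linear forms simultaneously prime, even without the extra congruence conditions, is the Hardy--Littlewood prime $k$-tuples conjecture. Bombieri--Vinogradov controls a \emph{single} prime indicator in arithmetic progressions, and there is no known extension handling several simultaneous primality constraints. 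So the main term you write down is not something you can prove, and everything downstream (the diagonalisation, the extraction of $(\log k/k)^{m+1}$, your explanation of $\bar y$) rests on an unproven input.

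The paper avoids this by a device you have omitted entirely: it keeps only \emph{one} prime indicator, $\mathbf{1}_{\mathbb{P}}(L_{i_{m+1}}(n))$, and majorises the remaining $m$ indicators by an auxiliary $m$-dimensional Selberg sieve,
\[
\mathbf{1}_{\mathbb{P}}(L_{i_1}(n))\cdots\mathbf{1}_{\mathbb{P}}(L_{i_m}(n))\ \le\ \frac{1}{\widetilde{\lambda}_{(1,\dots,1)}^{2}}\Bigl(\sum_{e_j\mid L_{i_j}(n)}\widetilde{\lambda}_{\mathbf{e}}\Bigr)^{2},
\]
with $\widetilde{\lambda}_{\mathbf{e}}$ supported on $e<R^{1/3}$. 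After this majorisation the inner sum has a single primality condition plus congruence conditions, so ordinary Bombieri--Vinogradov (for one prime) applies. The quantity $\bar y$ is \emph{not} a bound on some modified Maynard weight $y^{\ast}_{\mathbf r}$ as you suggest; it is the constant value chosen for the auxiliary variables $y_{\mathbf r_0}$ in this second sieve, tuned so that $\widetilde{\lambda}_{(1,\dots,1)}\gg(\log R)^m$. The $(3m)^m$ and the Euler product over $n(p)\ge 1$ arise from this normalisation (via a lower bound for $\sum_{\mathbf r_0}\mu^2(r_0)/\phi(r_0)$), not from any combinatorics of partitions of primes among coordinates. To repair your argument you must introduce this auxiliary sieve and then carry out the coupled diagonalisation in the variables $(\mathbf r,\mathbf r_0)$ as in Lemmas~4.1--4.3 of the paper.
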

Unfortunately, Theorem 1.1 is of difficult application due to the strong dependence on the factors $W_{i_{1}},...,W_{i_{m}}.$ However, estimating carefully the product present in $\bar{y}$ and averaging over all the $(m+1)$-tuples $(h_{i_{1}},...,h_{i_{m+1}})$ could lead to cancellations and we could obtain a bound free of these terms.
Anyway, adding some restrictions we can simplify the result above in the following corollary.
\begin{cor}
\label{cor 1.2} If we assume that the elements of the admissible set $\mathcal{L}=\{n+h_{1},...,n+h_{k}\}$ verify also that $h_{1},...,h_{k}\leq k^{2}$, we have
\begin{equation}
\label{eq: 1.5}
\sum_{x<n\leq 2x} \mathbf{1}_{\mathbb{P}}(L_{i_{1}}(n))\cdots\mathbf{1}_{\mathbb{P}}(L_{i_{m+1}}(n))w_{n}\leq D^{m}m^{m}\left(\frac{B}{\varphi(B)}\right)^{k-1}\mathfrak{S}_{B}(\mathcal{L})x(\log x)^{k}\left(\frac{\log k}{k•}\right)^{m+1}I_{k}(F),
\end{equation}
for any $1\leq i_{1}<...<i_{m+1}\leq k$, for a suitable absolute constant $D>0$ and if $x$ is sufficiently large.
\end{cor}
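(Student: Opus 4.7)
The plan is to deduce Corollary~\ref{cor 1.2} directly from Theorem~\ref{thm: 1.1} by observing that the extra hypothesis $h_j \le k^2$ forces the auxiliary moduli $W_{i_j}$ to collapse to a common value, so that the complicated quantity $\bar y$ admits a clean upper bound.

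The first step is to show that $W_{i_j}=WB$ for every $j$. By construction, each $W_j$ is a square-free multiple of $WB$, so its only possible additional prime divisors are primes $p\nmid WB$, namely $p>2k^2$ with $p\nmid B$. For any such $p$, the hypothesis $h_1<\ldots<h_k\le k^2<p$ implies that the $h_i$ are pairwise distinct modulo $p$, so $\omega(p)=k$ and hence $p$ divides exactly $k-\omega(p)=0$ of the $W_j$. Therefore $W_{i_j}=WB$, and in particular $(W_{i_1},\ldots,W_{i_m})=WB$, while $n(p)$ equals $m$ when $p\nmid WB$ and $0$ otherwise.

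Next I would bound the product appearing in the definition of $\bar y$. It reduces to
$$\prod_{p\nmid WB}\Bigl(1+\frac{m}{p-1}\Bigr)^{-1}\Bigl(1-\frac{1}{p}\Bigr)^{-m},$$
and a short Taylor expansion in $1/p$ shows that each local factor equals $1+O(m^2/p^2)$. Because $W$ is divisible by every prime up to $2k^2$, the product effectively runs over $p>2k^2$, and is therefore bounded by
$$\exp\Bigl(O\Bigl(m^2\sum_{p>2k^2}\frac{1}{p^2}\Bigr)\Bigr)=\exp\bigl(O(m^2/(k^2\log k))\bigr)=O(1),$$
where I use the standing hypothesis $m+1\le k$. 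Combining this with the previous step yields $\bar y^2\ll (3m)^{2m}(WB/\varphi(WB))^{2m}$.

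Finally, inserting this into \eqref{eq: 1.4} and regrouping the factors that involve $W$, $B$, and their totients, one exploits the coprimality $(W,B)=1$ (so that $\varphi(WB)=\varphi(W)\varphi(B)$) to obtain
$$\Bigl(\frac{WB}{\varphi(WB)}\Bigr)^{2m}\Bigl(\frac{WB}{\varphi(WB)}\Bigr)^{k-2m-1}\Bigl(\frac{\varphi(W)}{W}\Bigr)^{k-1}=\Bigl(\frac{B}{\varphi(B)}\Bigr)^{k-1},$$
which is precisely the $B/\varphi(B)$ dependence announced in \eqref{eq: 1.5}. The remaining work is purely cosmetic: one absorbs all the universal constants ($C$, the $3^{2m}$, the implicit constants in the $O(1)$ estimate, etc.) into a single constant $D$ and matches the power of $m$ against the $m^m$ claimed in the corollary. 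I expect the one genuine subtlety to lie in the second step, where uniformity in $m$ of the singular-series-like product is critical; the condition $m+1\le k$, which ensures that $m^2$ is dominated by $k^2\log k$, is what makes the argument go through.
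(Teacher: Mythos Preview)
Your proposal is correct and follows essentially the same route as the paper: the paper also observes that $h_j\le k^2$ forces every $W_i$ to equal $WB$ (since any $p\nmid WB$ satisfies $p>2k^2$ and hence $\omega(p)=k$), concludes $\bar y\ll(3m)^m(WB/\varphi(WB))^m$, and then substitutes into \eqref{eq: 1.4} using $(W,B)=1$ to collapse the $W$-dependence. Your Taylor-expansion justification that the product $\prod_{p\nmid WB}(1+m/(p-1))^{-1}(1-1/p)^{-m}$ is $O(1)$ is more explicit than what the paper writes (it simply asserts the bound on $\bar y$), but the argument is the intended one.
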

Comparing the estimate \eqref{eq: 1.5} with \eqref{eq: 1.3}, we see that they are of the same correct order of magnitude.
The motivation in proving the Theorem 1.1 and its Corollary relies on the analogy with \cite{BFM}. In that occasion was considered an admissible set of linear functions $\mathcal{L}=\{n+h_{1},...,n+h_{k}\},$ with $n\in [N,2N]$ and with $h_{j}-h_{i}\asymp \log N$. The elements of the $k$-tuple $\mathcal{H}=\{h_{1},...,h_{k}\}$ were allowed to grow with $N$ and were chosen weights $w_{n}$ suitable to such uniform situation. Under these circumstances, Banks, Freiberg and Maynard found an upper bound for the sum
$$\sum_{x<n\leq 2x}\textbf{1}_{\mathbb{P}}(n+h_{i})\textbf{1}_{\mathbb{P}}(n+h_{j})w_{n},$$
for each pair $i\neq j\in \{1,...,k\}$, which, inserted in a sieve method like \eqref{eq: 1.2}, led them to obtain $m$-tuples of primes where each of these primes belongs to a different subset, of a prescribed partition of $\mathcal{H}$, containing no other prime numbers.

Combining this with an Erd\H{o}s--Rankin construction, the authors of \cite{BFM} found information on the percentage of limit points of the sequence of normalized prime gaps in the set of positive real numbers. 
We are confident that the bound \eqref{eq: 1.5} can find applications in the context of the sieve method introduced in \cite{M2}, joined to the other results proved in \cite[Proposition 6.1]{M2}. Perhaps, an explicit version of \eqref{eq: 1.5} could be useful exactly regarding the study of limit points of the sequence of normalized prime gaps, in a way similar to \cite{BFM}.

The proof of Theorem \ref{thm: 1.1} is based on computations and ideas coming from the Maynard's paper \cite{M2}. Therefore, we borrow from it the notations and the main definitions, which we rewrite in section 2 and 3 for completeness, following closely the presentation in \cite{M2}.  
\section{Notations}
We consider $0<\theta<1$ a fixed real constant and $m\geq 1$ a positive integer. All asymptotic notation such as $O(\cdot), o(\cdot), \ll, \gg$ should be interpreted as referring to the limit $x\rightarrow\infty$, and any constants (implied by $O(\cdot)$) may depend on $\theta$ or $m$, but no other variable, unless otherwise noted.

Let $k=\#\mathcal{L}\geq m+1$ be the size of $\mathcal{L}=\{L_1,\dots,L_k\}$ an admissible set of integer linear functions of the form $L_i(n)=n+h_i$. Moreover, $B$ will be an integer, and $x,k$ will always to be assumed sufficiently large (in terms of $\theta$ and $m$).

All sums, products and suprema will be assumed to be taken over variables lying in the natural numbers $\mathbb{N}=\{1,2,\dots\}$ unless specified otherwise. The exception to this is when sums or products are over a variable $p$, which instead will be assumed to lie in the prime numbers $\mathbb{P}=\{2,3,\dots,\}$.

Throughout the paper, $\phi$ will denote the Euler totient function, $\tau_{r}(n)$ the number of ways of writing $n$ as a product of $r$ natural numbers and $\mu$ the Moebius function. We let $\#\mathcal{A}$ denote the number of elements of a finite set $\mathcal{A}$, and $\mathbf{1}_{\mathcal{A}}(x)$ the indicator function of $\mathcal{A}$ (so $\mathbf{1}_{\mathcal{A}}(x)=1$ if $x\in\mathcal{A}$, and 0 otherwise). We let $(a,b)$ be the greatest common divisor of integers $a$ and $b$, and $[a,b]$ the least common multiple of integers $a$ and $b$. (For real numbers $x,y$ we also use $[x,y]$ to denote the closed interval. The usage of $[\cdot,\cdot]$ should be clear from the context.)

To simplify notation we will use vectors in a way which is somewhat non-standard. In fact, $\mathbf{d}$ will denote a vector $(d_1,\dots,d_k)\in\mathbb{N}^k$. Given a vector $\mathbf{d}$, when it does not cause confusion, we write $d=\prod_{i=1}^kd_i$. Given $\mathbf{d},\mathbf{e}$, we will let $[\mathbf{d},\mathbf{e}]=\prod_{i=1}^k[d_i,e_i]$ be the product of least common multiples of the components of $\mathbf{d},\mathbf{e}$, and similarly let $(\mathbf{d},\mathbf{e})=\prod_{i=1}^k(d_i,e_i)$ be the product of greatest common divisors of the components, and $\mathbf{d}|\mathbf{e}$ denote the $k$ conditions $d_i|e_i$ for each $1\le i\le k$.

\section{Main definitions}
We recall that we are given an admissible set $\mathcal{L}=\{L_1,\dots,L_k\}=\{n+h_1,...,n+h_k\}$ of integer linear functions, an integer $B$ and quantities $R,x$. We assume that $0\leq h_{1}<...<h_{k}\leq x$ and $k$ is sufficiently large in terms of $m$ and satisfies $m+1\leq k\leq (\log x)^{1/5}.$ Moreover, we fix $R$ as $R=x^{\theta/3}$. 

We define the multiplicative functions $\omega=\omega_{\mathcal{L}}$ and $\phi_\omega=\phi_{\omega,\mathcal{L}}$ and the singular series $\mathfrak{S}_D(\mathcal{L})$ for an integer $D$ by
\begin{equation}
\label{eq:3.1}
\omega(p)= \left\{ \begin{array}{ll}
        \#\{1\leq n\leq p: \prod_{i=1}^k L_{i}(n)\equiv 0\pmod{p}\} & \mbox{if $p\nmid B$};\\
        0 & \mbox{$p|B$}.\end{array} \right. 
\end{equation}
\begin{equation}
\label{eq: 3.2}
\phi_\omega(d)=\prod_{p|d}(p-\omega(p)),
\end{equation}
\begin{equation}
\label{eq:3.3}
\mathfrak{S}_D(\mathcal{L})=\prod_{p\nmid D}\Bigl(1-\frac{\omega(p)}{p}\Bigr)\Bigl(1-\frac{1}{p}\Bigr)^{-k}.
\end{equation}
Since $\mathcal{L}$ is admissible, we have $\omega(p)<p$ for all $p$ and so $\phi_\omega(n)>0$ and $\mathfrak{S}_D(\mathcal{L})> 0$ for any integer $D$. Since $\omega(p)=k$ for all $p\nmid \prod_{i\ne j}(h_j-h_i)$ we see the product $\mathfrak{S}_D(\mathcal{L})$ converges. 
We will consider sieve weights $w_n=w_n(\mathcal{L})$, which are defined to be 0 if $\prod_{i=1}^kL_i(n)$ is a multiple of any prime $p\le 2k^2$ with $p\nmid B$. We let $W=\prod_{p\le 2k^2,p\nmid B}p$. If $(L_i(n),W)=1$ for all $1\le i\le k$ we have
\begin{equation}
\label{eq:3.4}
w_n=\Bigl(\sum_{d_i|L_i(n), \forall i}\lambda_{\mathbf{d}}\Bigr)^2,
\end{equation}
for some real variables $\lambda_{\mathbf{d}}$ depending on $\mathbf{d}=(d_1,\dots,d_k)$. We first restrict our $\lambda_{\mathbf{d}}$ to be supported on $\mathbf{d}$ with $d=\prod_{i=1}^k d_i$ square-free and coprime to $WB$.

Given a prime $p\nmid WB$, let $1\le r_{p,1}<\dots< r_{p,\omega(p)}\le p$ be the $\omega(p)$ residue classes for which $\prod_{i=1}^k L_i(n)$ vanishes modulo $p$. For each such prime $p$, we fix a choice of indices $j_{p,1},\dots,j_{p,\omega(p)}\in\{1,\dots,k\}$ such that $j_{p,i}$ is the smallest index such that
\begin{equation}
\label{eq:3.5}
L_{j_{p,i}}(r_{p,i})\equiv0\pmod{p}
\end{equation}
for each $i\in\{1,\dots,\omega(p)\}$. All the functions $L_i$ are linear and, since $\mathcal{L}$ is admissible, none of the $L_i$ are a multiple of $p$. This means that for any $L\in\mathcal{L}$ there is at most one residue class for which $L$ vanishes modulo $p$. Thus the indices $j_{p,1},\dots, j_{p,\omega(p)}$ we have chosen must be distinct. We now restrict the support of $\lambda_{\mathbf{d}}$ to $(d_j,p)=1$ for all $j\notin\{j_{p,1},\dots,j_{p,\omega(p)}\}$.

We see these restrictions are equivalent to the restriction that the support of $\lambda_{\mathbf{d}}$ must lie in the set
\begin{equation}
\label{eq:3.6}
\mathcal{D}_k=\mathcal{D}_k(\mathcal{L})=\{\mathbf{d}\in\mathbb{N}^k:\mu^2(d)=1, (d_j,W_j)=1, \forall j\},
\end{equation}
where $W_j$ are square-free integers each a multiple of $WB$, and any prime $p\nmid WB$ divides exactly $k-\omega(p)$ of the $W_j$ (such $p|W_j$ if $j\notin\{j_{p,1},\dots,j_{p,\omega(p)}\}$).

The key point of these restrictions is so that different components of different $\mathbf{d}$ occurring in our sieve weights will be relatively prime. Indeed, let $\mathbf{d}$ and $\mathbf{d'}$ both occur in the sum \eqref{eq:3.4}. If $p|d_i$ then $p|L_i(n)$, and so $i$ must be the chosen index for the residue class $n$ $\pmod{p}$. But if we also have $p|d'_j$ then similarly $j$ must be the chosen index for this residue class, and so we must have $i=j$. Hence $(d_i,d'_j)=1$ for all $i\ne j$.

We define $\lambda_{\mathbf{d}}$ in terms of variables $y_{\mathbf{r}}$ supported on $\mathbf{r}\in\mathcal{D}_k$ by
\begin{equation}
\label{eq:3.7}
\lambda_{\mathbf{d}}=\mu(d)d\sum_{\mathbf{d}|\mathbf{r}}\frac{y_{\mathbf{r}}}{\phi_\omega(r)},\qquad y_{\mathbf{r}}=\frac{\mathbf{1}_{\mathcal{D}_k}(\mathbf{r})W^kB^k}{\phi(W B)^k}\mathfrak{S}_{WB}(\mathcal{L})F\Bigl(\frac{\log{r_1}}{\log{R}},\dots,\frac{\log{r_k}}{\log{R}}\Bigr),
\end{equation}
where $F:\mathbb{R}^k\rightarrow\mathbb{R}$ is a smooth function given by
\begin{equation}
\label{eq:3.8}
F(t_1,\dots,t_k)=\psi\Bigl(\sum_{i=1}^kt_i\Bigr)\prod_{i=1}^k\frac{\psi(t_i/U_k)}{1+T_kt_i},\qquad T_k=k\log{k},\qquad U_k=k^{-1/2}.
\end{equation}
Here $\psi:[0,\infty)\rightarrow[0,1]$ is a fixed smooth non-increasing function supported on $[0,1]$ which is $1$ on $[0,9/10]$. In particular, we note that this choice of $F$ is non-negative, and that the support of $\psi$ implies that
\begin{equation}
\label{eq:3.9}
\lambda_{\mathbf{d}}=0\quad\text{if $\textstyle d=\prod_{i=1}^kd_i>R$.}
\end{equation}
We will find it useful to also consider the closely related functions $F_1$ and $F_2$ which will appear in our error estimates, defined by
\begin{equation}
\label{eq:3.10}
 F_1 (t_1,\dots,t_k)=\prod_{i=1}^k \frac{\psi(t_i/U_k)}{1+T_kt_i},\qquad 
 F_2 (t_1,\dots,t_k)=\sum_{1\le j\le k}\Bigl(\frac{\psi(t_j/2)}{1+T_kt_j}\prod_{\substack{1\le i\le k\\ i\ne j}}\frac{\psi(t_i/U_k)}{1+T_kt_i}\Bigr).
\end{equation}
Finally, by Moebius inversion, we see that \eqref{eq:3.7} implies that for $\mathbf{r}\in\mathcal{D}_k$
\begin{equation}
\label{eq:3.11}
y_{\mathbf{r}}=\mu(r)\phi_\omega(r)\sum_{\mathbf{r}|\mathbf{f}}\frac{y_{\mathbf{f}}}{\phi_\omega(f)}\sum_{\substack{\mathbf{d}\\\mathbf{r}|\mathbf{d},\mathbf{d}|\mathbf{f}}}\mu(d)=\mu(r)\phi_\omega(r)\sum_{\mathbf{r}|\mathbf{d}}\frac{\lambda_{\mathbf{d}}}{d}.
\end{equation}
\section{Proof of Theorem 1.1}
The main aim of this section is to prove the estimate \eqref{eq: 1.4}, which is the heart of Theorem \ref{thm: 1.1}. We start considering the following multidimensional Selberg bound:
\begin{equation}
\label{eq:4.1}
\textbf{1}_{\mathbb{P}}(n+h_{i_{1}})\cdots\textbf{1}_{\mathbb{P}}(n+h_{i_{m}})\leq \frac{1}{\widetilde{\lambda}^{2}_{(1,...,1)}}\bigg(\sum_{e_{1}|n+h_{i_{1}},..., e_{m}|n+h_{i_{m}}}\wt{\lambda}_{\textbf{e}}\bigg)^{2},
\end{equation}
where $\widetilde{\lambda}_{\textbf{e}}$ is a real function, with $\widetilde{\lambda}_{(1,...,1)}\neq 0$, supported on the set
\begin{equation}
\label{eq:4.2}
\mathcal{E}_{m}=\lbrace \textbf{e}\in\mathbb{N}^{m}: e<R^{\frac{1}{3}}, \mu^{2}(e)=1,\ \textrm{and}\ (e_{j}, W_{i_{j}})=1, \forall j=1,...,m\rbrace.
\end{equation}
Inserting the upper bound \eqref{eq:4.1} in the sum \eqref{eq: 1.4}, which now we consider restricted on the arithmetic progression $n\equiv v_{0}\pmod{W}$ with $v_{0}$ such that $(\prod_{i=1}^{k}L_{i}(v_{0}), W)=1$, expanding $w_{n}$ using \eqref{eq:3.4} and swapping the order of summation, we find
\begin{equation}
\label{eq:4.3}
\sum_{\substack{x< n\leq 2x\\ n\equiv v_{0}\pmod{W}}}\textbf{1}_{\mathbb{P}}(n+h_{i_{1}})\cdots\textbf{1}_{\mathbb{P}}(n+h_{i_{m+1}})w_{n}(\mathcal{L})
\end{equation}
$$\leq \frac{1}{\widetilde{\lambda}^{2}_{(1,...,1)}}\sum_{\textbf{e},\textbf{e}'\in \mathcal{E}_{m}} \wt{\lambda}_{\textbf{e}}\wt{\lambda}_{\textbf{e}'}\sum_{\substack{\textbf{d},\textbf{d}'\in \mathcal{D}_{k}\\d_{i_{j}}=d'_{i_{j}}=1,\forall j=1,...,m+1} }\lambda_{\textbf{d}}\lambda_{\textbf{d}'}\sum_{\substack{x<n\leq 2x\\ n\equiv v_{0}\pmod{W}\\n\equiv -h_{i}\pmod{[d_{i},d^{\prime}_{i}]},\forall i\\ n\equiv -h_{i_{j}}\pmod{[e_{j},e'_{j}]},\forall j}}\textbf{1}_{\mathbb{P}}(n+h_{i_{m+1}}).$$
We note that the sum over $\textbf{d}$ and $\textbf{d}'$ is restricted to have $d_{i_{j}}=d'_{i_{j}}=1,$ for every $j=1,...,m+1$, otherwise the sum on the left hand side of \eqref{eq:4.3} contributes to $0$, by the support of our weights.

We have no contribution unless $(d_{i}d'_{i},d_{j}d'_{j})=(e_{i}e'_{i},e_{j}e'_{j})=1,\forall i\neq j$ and $(d_{i}d'_{i},e_{j}e'_{j})=1, \forall i,j$. In fact, if we had $p|(d_{i}d'_{i},d_{j}d'_{j})$ or $p|(e_{i}e'_{i},e_{j}e'_{j})$ or again $p|(d_{i}d'_{i},e_{j}e'_{j})$, for suitable $i,j$, then we would find two different indices $a,b$ for which $p|n+h_{a}$ and $p|n+h_{b}$. By the support of our variables, it implies that $a$ and $b$ should be the chosen indices for the residue class $n\pmod{p}$ and therefore they would be equal. Again by the support of $\lambda_{\textbf{d}}$ and of $\wt{\lambda}_{\textbf{e}}$, we have that $(d_{i}d_{i}', W)=(e_{j}e_{j}',W)=1$, for any $i$ and $j$. We see that we can combine the congruence conditions by the Chinese remainder theorem, and the inner sum in the second line of \eqref{eq:4.3} becomes
\begin{equation}
\label{eq:4.4}
\sum_{\substack{x<n\leq 2x\\ n\equiv a\pmod{q}}}\textbf{1}_{\mathbb{P}}(n+h_{i_{m+1}})=\frac{\sum_{x< n\leq 2x}\textbf{1}_{\mathbb{P}}(n+h_{i_{m+1}})}{\varphi(W)\varphi([\textbf{d},\textbf{d}'])\varphi([\textbf{e},\textbf{e}'])}+ O(E_{q}),
\end{equation} 
for some $a$ coprime with $q=W\prod_{i}[d_{i},d_{i}']\prod_{j}[e_{j},e_{j}']$ and where 
$$E_{q}=\max_{(a,q)=1}\bigg|\sum_{\substack{x< n\leq 2x\\ n\equiv a\pmod{q}}}\textbf{1}_{\mathbb{P}}(n+h_{i_{m+1}})-\frac{\sum_{x< n\leq 2x}\textbf{1}_{\mathbb{P}}(n+h_{i_{m+1}})}{\varphi(q)}\bigg|.$$ 
Thus, the principal contribution in the estimate of \eqref{eq:4.3} comes from
\begin{equation}
\label{eq: 4.5}\frac{1}{\wt{\lambda}_{(1,...,1)}^{2}}\frac{1}{\varphi(W)}\sum_{x< n\leq 2x}\textbf{1}_{\mathbb{P}}(n+h_{i_{m+1}})\sum_{\textbf{e},\textbf{e}'\in \mathcal{E}_{m}} \frac{\wt{\lambda}_{\textbf{e}}\wt{\lambda}_{\textbf{e}'}}{\varphi([\textbf{e},\textbf{e}'])}\sideset{}{'}\sum_{\substack{\textbf{d},\textbf{d}'\in \mathcal{D}_{k}\\d_{i_{j}}=d'_{i_{j}}=1,\forall j=1,...,m+1} }\frac{\lambda_{\textbf{d}}\lambda_{\textbf{d}'}}{\varphi([\textbf{d},\textbf{d}'])}.
\end{equation}
Here we write $\Sigma^{\prime}$ for the summation with all the restrictions stated above. We note that by the Brun--Titchmarsh theorem \cite[Theorem 3.9]{MV}
\begin{equation}
\label{eq:4.6}
\sum_{x< n\leq 2x}\textbf{1}_{\mathbb{P}}(n+h_{i_{m+1}})\ll \frac{x}{\log x}.
\end{equation}
On the other hand, the error term is
\begin{equation}
\label{eq:4.7}
\ll \frac{1}{\wt{\lambda}^{2}_{(1,...,1)}}\sum_{\textbf{e},\textbf{e}'\in \mathcal{E}_{m}} |\wt{\lambda}_{\textbf{e}}\wt{\lambda}_{\textbf{e}'}|\sideset{}{'}\sum_{\substack{\textbf{d},\textbf{d}'\in \mathcal{D}_{k}\\d_{i_{j}}=d'_{i_{j}}=1,\forall j=1,...,m+1} }|\lambda_{\textbf{d}}\lambda_{\textbf{d}'}|E_{q}.
\end{equation}
To work with \eqref{eq: 4.5} and \eqref{eq:4.7}, we make some change of variables:
\begin{equation}
\label{eq:4.8}
y_{\textbf{r},\textbf{r}_{0}}=\mu(rr_{0})\varphi_{\omega}(rr_{0})\sum_{\substack{\textbf{r}|\textbf{d}\\ \textbf{r}_{0}|\textbf{e}\\(d,e)=1\\ d_{i_{j}}=1,\forall j=1,...,m+1}} \frac{\lambda_{\textbf{d}}\wt{\lambda}_{\textbf{e}}}{\varphi(d)\varphi(e)},
\end{equation}
\begin{equation}
\label{eq:4.9}
y_{\textbf{r}_{0}}=\mu(r_{0})\varphi(r_{0})\sum_{\textbf{r}_{0}|\textbf{e}}\frac{\wt{\lambda}_{\textbf{e}}}{\varphi(e)},
\end{equation}
\begin{equation}
\label{eq:4.10}
y^{(m)}_{\textbf{r}}=\mu(r)\varphi_{\omega}(r)\sum_{\substack{\textbf{r}|\textbf{d}\\d_{i_{j}}=1,\forall j=1,...,m+1}}\frac{\lambda_{\textbf{d}}}{\varphi(d)}.
\end{equation}
We note that $\textbf{r}\in\mathcal{D}_{k}$ and it is such that $r_{i_{j}}=1,\forall j=1,...,m+1$. Moreover, $\textbf{r}_{0}\in \mathcal{E}_{m}$. Finally, from the restrictions present in the sum in \eqref{eq:4.8} we may suppose that $y_{\textbf{r},\textbf{r}_{0}}=0$ if there exists a couple of components $r_{i},r_{0_{j}}$ such that $(r_{i},r_{0_{j}})>1$.

In the following lemma we concentrate on the error term \eqref{eq:4.7}. 
\begin{lem}
\label{lem 4.1}
We have 
\begin{equation}
\label{eq: 4.11}
\frac{1}{\wt{\lambda}^{2}_{(1,...,1)}}\sum_{\mathbf{e},\mathbf{e}'\in\mathcal{E}_{m}} |\wt{\lambda}_{\mathbf{e}}\wt{\lambda}_{\mathbf{e}'}|\sideset{}{'}\sum_{\substack{\mathbf{d},\mathbf{d}'\in \mathcal{D}_{k}\\d_{i_{j}}=d'_{i_{j}}=1,\forall j=1,...,m+1} }|\lambda_{\mathbf{d}}\lambda_{\mathbf{d}'}|E_{q}\ll \frac{x}{W(\log x)^{2k^2}},
\end{equation}
if $x$ is sufficiently large.
\end{lem}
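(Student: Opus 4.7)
The lemma is an error estimate of the standard type for multi-dimensional Selberg sieves, essentially parallel to those in \cite[Proposition 6.1]{M2}. The plan is to bound the sieve weights pointwise, regroup the sum by the combined modulus $q=W[\mathbf{d},\mathbf{d}'][\mathbf{e},\mathbf{e}']$, and then apply the Bombieri--Vinogradov theorem to $\sum_{q}E_{q}$.

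From the Möbius inversion formula \eqref{eq:3.7} together with the boundedness of $F$ and $\psi$, one obtains $|\lambda_{\mathbf{d}}|\ll(\log x)^{O(k)}$ uniformly over $\mathbf{d}\in\mathcal{D}_{k}$ with $d<R$, and analogous arguments starting from \eqref{eq:4.9} give $|\wt{\lambda}_{\mathbf{e}}|\ll(\log x)^{O(k)}$ together with a matching lower bound for $|\wt{\lambda}_{(1,\dots,1)}|$. The support conditions force $d,d'<R$ and $e,e'<R^{1/3}$, hence $q\ll Wx^{8\theta/9}$; since $\log W\ll k^{2}\le(\log x)^{2/5}$ we have $W=x^{o(1)}$, and for $\theta$ taken small enough (as one may, $\theta$ being a free parameter) this lies safely in the Bombieri--Vinogradov range $q<x^{1/2-\delta}$. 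For a fixed squarefree $q$, the number of admissible quadruples $(\mathbf{d},\mathbf{d}',\mathbf{e},\mathbf{e}')$ with $W[\mathbf{d},\mathbf{d}'][\mathbf{e},\mathbf{e}']=q$ is bounded above by a divisor function $\tau_{C}(q)$ with $C=O(k)$, coming from the combinatorial choice of distributing the prime factors of $q$ among the slots.

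With these inputs, the left-hand side of \eqref{eq: 4.11} reduces to $(\log x)^{O(k)}\sum_{q<x^{1/2-\delta},\,W|q}\mu^{2}(q)\tau_{C}(q)E_{q}$. A Cauchy--Schwarz splitting combines the trivial bound $E_{q}\ll x/\phi(q)$ on one factor (from which, together with $W\mid q$ and standard Mertens-type estimates applied after the substitution $q=Wq'$ with $(q',W)=1$, the required $1/W$ factor is extracted) with the Bombieri--Vinogradov estimate $\sum_{q}\mu^{2}(q)E_{q}\ll_{A}x/(\log x)^{A}$ on the other, yielding the claimed bound once $A$ is chosen large enough relative to $2k^{2}$.

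The main obstacle is uniformity in $k$: since the target saving $(\log x)^{2k^{2}}$ depends on $k$, which may itself grow with $x$ up to $(\log x)^{1/5}$, one cannot simply invoke Bombieri--Vinogradov with a fixed exponent. This is resolved by appealing to the Siegel--Walfisz-based proof of Bombieri--Vinogradov, which permits $A$ to grow like a small power of $\log x$; our requirement $A\asymp k^{2}\le(\log x)^{2/5}$ fits comfortably within this range, and is exactly the mechanism already used throughout \cite{M2}.
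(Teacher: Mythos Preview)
Your outline follows the paper's argument closely: bound the weights pointwise, collapse to a sum $\sum_q \mu^2(q)\tau_{O(k)}(q)E_q$ over squarefree moduli $q\ll Wx^{8\theta/9}$, then split by Cauchy--Schwarz with Bombieri--Vinogradov on one factor and the trivial bound $E_q\ll x/\phi(q)$ on the other. Two points are worth correcting. First, the paper handles $\wt{\lambda}_{\mathbf{e}}$ more cleanly than you do: since $y_{\mathbf{r}_0}$ is a positive constant, M\"obius inversion gives $|\wt{\lambda}_{\mathbf{e}}|\le \wt{\lambda}_{(1,\dots,1)}$ directly, so the prefactor $\wt{\lambda}_{(1,\dots,1)}^{-2}$ simply cancels rather than contributing competing $(\log x)^{O(k)}$ factors. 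Second, your substitution $q=Wq'$ inside the trivial-bound factor only yields $W^{-1/2}$ after the square root, not $W^{-1}$; this is harmless because $W\ll e^{O(k^2)}$ is absorbed into the log-power savings anyway (which is in fact how the paper obtains the $1/W$), but the mechanism you describe does not directly give what you claim.

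The more substantive gap is in your handling of uniformity in $k$. You write that the Siegel--Walfisz-based proof of Bombieri--Vinogradov ``permits $A$ to grow like a small power of $\log x$''. As stated this is false: the implied constant in Siegel--Walfisz depends ineffectively on $A$, so one cannot let $A$ grow with $x$ without further input. What actually makes the argument work is the condition $(q,B)=1$, which is built into the support of both $\lambda_{\mathbf{d}}$ and $\wt{\lambda}_{\mathbf{e}}$ but which you never mention. In the paper $B$ is taken (via Landau--Page) to be the largest prime factor of the exceptional modulus $q_0$, if one exists below $\exp(2c_1\sqrt{\log x})$; summing only over $(q,B)=1$ removes the exceptional character and yields an \emph{effective} saving of $\exp(-c\sqrt{\log x})$, which comfortably dominates $(\log x)^{100k^2}$ since $k^2\le (\log x)^{2/5}$. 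This is indeed the device used in \cite{M2}, but it is the specific choice of $B$, not Siegel--Walfisz, that carries the uniformity. Your proposal should make this explicit.
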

\begin{proof}
By the Moebius inversion formula we can write
\begin{equation}
\label{eq: 4.12}
\wt{\lambda}_{\textbf{e}}=\varphi(e)\mu(e)\sum_{\textbf{e}|\textbf{r}_{0}\in\mathcal{E}_{m}}\frac{y_{\textbf{r}_{0}}}{\varphi(r_{0})}. 
\end{equation}
From this we easily deduce that
\begin{equation}
\label{eq: 4.13}
|\wt{\lambda}_{\textbf{e}}|\leq \varphi(e)\sum_{\textbf{e}|\textbf{r}_{0}\in\mathcal{E}_{m}}\frac{y_{\textbf{r}_{0}}\mu^{2}(r_{0})}{\varphi(r_{0})}\leq\sum_{\textbf{k}\in\mathcal{E}_{m}}\frac{y_{\textbf{k}}\mu^{2}(k)}{\varphi(k)}=\wt{\lambda}_{(1,...,1)},
\end{equation}
if we choose $y_{\textbf{r}_{0}}$ to be a positive constant, when $\textbf{r}_{0}\in \mathcal{E}_{m}$.
Using the estimate for $|\lambda_{\textbf{d}}|$ given by \cite[Lemma 8.5 (i)]{M2} jointly with \eqref{eq: 4.13}, we can estimate \eqref{eq:4.7} with
\begin{equation}
\label{eq: 4.14}
\ll (\log x)^{2k}\sum_{q\leq W R^{2}R^{2/3}, (q, B)=1} \mu^{2}(q)\tau_{3k}(q)E_{q}.
\end{equation}
Note that $W R^{2}R^{2/3}=WR^{8/3}= Wx^{\frac{8\theta}{9•}}<x^{\theta}$. We now take $\theta=1/3$. By the Landau--Page theorem (see, for example, \cite[Chapter 14]{D}) there is at most one modulus $q_0\le \exp(2c_1\sqrt{\log{x}})$, such that there exists a primitive character $\chi$ modulo $q_0$ for which $L(s,\chi)$ has a real zero larger than $1-c_2(\log{x})^{-1/2}$ (for suitable fixed constants $c_1,c_2>0$). If this exceptional modulus $q_0$ exists, we take $B$ to be the largest prime factor of $q_0$, and otherwise we take $B=1$. For all $q\le \exp(2c_1\sqrt{\log{x}})$ with $q\ne q_0$ we then have the effective bound (see, for example, \cite[Chapter 20]{D})
\begin{equation}
\label{eq: 4.15}
\phi(q)^{-1}\sideset{}{^*}\sum_{\chi}|\psi(x,\chi)|\ll x\exp(-3c_1\sqrt{\log{x}}),
\end{equation}
where the summation is over all primitive $\chi\pmod{q}$ and $\psi(x,\chi)=\sum_{n\le x}\chi(n)\Lambda(n)$.

Following a standard proof of the Bombieri--Vinogradov Theorem (see \cite[Chapter 28]{D}, for example), we have
\begin{equation}
\label{eq: 4.16}
\sum_{\substack{q<x^{1/2-\varepsilon}\\ (q,B)=1}}\sup_{(a,q)=1}\Bigl|\pi(x,q,a)-\frac{\pi(x)}{\phi(q)}\Bigr|\ll x\exp(-c_1\sqrt{\log{x}})+\log{x}\sum_{\substack{q<\exp(2c_1\sqrt{\log{x}})\\ (q,B)=1}}\sideset{}{^*}\sum_\chi\frac{|\psi'(x,\chi)|}{\phi(q)},
\end{equation}
for a certain $\varepsilon>0$, which shows that
$$\sum_{\substack{q<x^{1/3}\\ (q, B)=1}} \mu^{2}(q)E_{q}\ll \frac{x}{(\log x)^{100k^{2}}},$$
if $x$ is sufficiently large. Now, using that trivially $E_{q}\ll x/\phi(q)$ and applying Cauchy--Schwarz, we find that \eqref{eq: 4.14} is
\begin{equation}
\label{eq: 4.17}
\ll (\log x)^{2k}\bigg(\sum_{q<x^{1/3}, (q, B)=1} \mu^{2}(q)\tau_{3k}^{2}(q)E_{q}\bigg)^{1/2}\bigg(\sum_{q<x^{1/3}, (q, B)=1} \mu^{2}(q)E_{q}\bigg)^{1/2}
\end{equation}
$$\ll (\log x)^{2k}\sqrt{x}\bigg(\sum_{q<x^{1/3}, (q, B)=1} \frac{\mu^{2}(q)\tau_{3k}^{2}(q)}{\varphi(q)}\bigg)^{1/2}\bigg(\frac{x}{\log^{100k^{2}}x•}\bigg)^{1/2}.$$
This concludes the proof of the Lemma 4.1, since
$$\sum_{q<x^{1/3}, (q, B)=1} \frac{\mu^{2}(q)\tau_{3k}^{2}(q)}{\phi(q)}\leq \prod_{p\leq x^{1/3}}\left(1+\frac{9k^{2}}{p-1}\right)\leq e^{9k^{2}C}\log^{9k^{2}}x\ll (\log x)^{10k^{2}},$$
for a suitable constant $C>0$, by Mertens's Theorem \cite[Theorem 2.7]{MV}, if $x$ is sufficiently large.

We note that, if $q_0$ exists it must be square-free apart from a possible factor of at most 4, and must satisfy $q_0\gg (\log{x})/(\log\log{x})^2$. In this case we find $\log\log{x}\ll B\ll \exp(c_1\sqrt{\log{x}})$. Thus, whether or not $q_0$ exists, we have $B/\phi(B)=1+o(1)$.
\end{proof}
After the change of variables we are left with the estimate of \eqref{eq: 4.5}. The double sum is equal to
\begin{equation}
\label{eq: 4.18}
\sum_{\textbf{e},\textbf{e}'\in\mathcal{E}_{m}} \frac{\varphi(e)\varphi(e')}{\varphi([\textbf{e},\textbf{e}'])}\sideset{}{'}\sum_{\substack{\textbf{d},\textbf{d}'\in \mathcal{D}_{k}\\d_{i_{j}}=d'_{i_{j}}=1,\forall j=1,...,m+1} }\frac{\varphi(d)\varphi(d')}{\varphi([\textbf{d},\textbf{d}'])}\sum_{\substack{\textbf{d}|\textbf{r}\\ \textbf{e}|\textbf{r}_{0} \\ (r,r_{0})=1}}\frac{y_{\textbf{r},\textbf{r}_{0}}}{\varphi_{\omega}(rr_{0})}\sum_{\substack{\textbf{d}'|\textbf{s}\\ \textbf{e}'|\textbf{s}_{0} \\ (s,s_{0})=1}}\frac{y_{\textbf{s},\textbf{s}_{0}}}{\varphi_{\omega}(ss_{0})}
\end{equation}
$$=\sum_{\substack{\textbf{r},\textbf{s}\in \mathcal{D}_{k}\\r_{i_{j}}=s_{i_{j}}=1,\forall j=1,...,m+1\\ \textbf{r}_{0},\textbf{s}_{0}\in \mathcal{E}_{m}\\ (r,r_{0})=(s,s_{0})=1 }}\frac{y_{\textbf{r},\textbf{r}_{0}}}{\varphi_{\omega}(rr_{0})}\frac{y_{\textbf{s},\textbf{s}_{0}}}{\varphi_{\omega}(ss_{0})}\sideset{}{'}\sum_{\substack{\textbf{d}|\textbf{r},\textbf{d}'|\textbf{s}\\ \textbf{e}|\textbf{r}_{0},\textbf{e}'|\textbf{s}_{0}\\d_{i_{j}}=d'_{i_{j}}=1,\forall j}}\frac{\varphi(d)\varphi(d')\mu(d)\mu(d')}{\varphi([\textbf{d},\textbf{d}'])•}\frac{\varphi(e)\varphi(e')\mu(e)\mu(e')}{\varphi([\textbf{e},\textbf{e}'])•}.$$
Now we restrict $\mathcal{D}_{k}$ asking that $(d_{i}, W_{i}')=1$, for all $i\in\{1,...,k\}$ but $i\neq i_{1},...,i_{m+1}$, where we put 
\begin{equation}
\label{eq: 4.19}
W_{i}'=\prod_{p|W_{i}(h_{i_{1}}-h_{i})\cdots (h_{i_{m+1}}-h_{i})} p.
\end{equation}
In fact, in the sum in the first line of \eqref{eq:4.3} we have no contribution from the $n+h_{i_{j}}$ not primes. Therefore, if $p|d_{i}$, for a certain $i$, than the sum defining $w_{n}$ requires that $p|n+h_{i}$. However, if we also had $p|h_{i_{j}}-h_{i}$, for a certain $j$, then this would imply $p|n+h_{i_{j}}$ and this is not possible because by the support of $\lambda_{\textbf{d}}$ we have $d_{i}<R$. For the sake of simplicity, we will work with the weaker conditions
\begin{equation}
\label{eq: 4.20}
W_{i}'=\prod_{p|W_{i}(h_{i_{1}}-h_{i},...,h_{i_{m+1}}-h_{i})} p.
\end{equation} 
Here $(h_{i_{1}}-h_{i},...,h_{i_{m+1}}-h_{i})$ stands for the greatest common divisor of these $m+1$ differences. We indicate with $\mathcal{D}_{k}'$ the set $\mathcal{D}_{k}$ restricted in this way. 

The inner sum in the second line of \eqref{eq: 4.18} can be written as $\prod_{p|rr_{0}ss_{0}}S_{p}(\textbf{r},\textbf{r}_{0},\textbf{s},\textbf{s}_{0}),$ where 
\begin{equation}
\label{eq: 4.21}
S_{p}(\textbf{r},\textbf{r}_{0},\textbf{s},\textbf{s}_{0})=\sideset{}{'}\sum_{\substack{\textbf{d}|\textbf{r},\textbf{d}'|\textbf{s}\\ \textbf{e}|\textbf{r}_{0},\textbf{e}'|\textbf{s}_{0}\\d_{i_{j}}=d'_{i_{j}}=1,\forall j=1,...,m+1 \\ [e_{1},...,e_{m},e'_{1},...,e'_{m},d_{1},...,d_{k},d'_{1},...,d'_{k}]|p}}\frac{\varphi(de)\varphi(d'e')\mu(de)\mu(d'e')}{\varphi([\textbf{d},\textbf{d}'])\varphi([\textbf{e},\textbf{e}'])}.
\end{equation}
Here $[e_{1},...,e_{m},e'_{1},...,e'_{m},d_{1},...,d_{k},d'_{1},...,d'_{k}]$ indicates the least common divisor of the terms inside it. We easily find that 
\[S_{p}(\textbf{r},\textbf{r}_{0},\textbf{s},\textbf{s}_{0})= \left\{ \begin{array}{lll}
         p-2 & \mbox{if $p|(\textbf{r},\textbf{s})(\textbf{r}_{0},\textbf{s}_{0})$};\\
        -1 & \mbox{if $p\nmid(\textbf{r},\textbf{s})(\textbf{r}_{0},\textbf{s}_{0})$ but $p|rr_{0},ss_{0}$};\\
        0 & \mbox{if $p|rr_{0},p\nmid ss_{0}$ or $p\nmid rr_{0},p|ss_{0}$}.\end{array} \right. \] 
Thus, from the last condition we may assume $rr_{0}=ss_{0}$. 

Using the trivial bound $|y_{\textbf{r},\textbf{r}_{0}}y_{\textbf{s},\textbf{s}_{0}}|\leq \frac{1}{2} (y_{\textbf{r},\textbf{r}_{0}}^{2}+y_{\textbf{s},\textbf{s}_{0}}^{2})$, we see that (by symmetry) the double sum in the last line of \eqref{eq: 4.18} may be bounded by
\begin{equation}
\label{eq: 4.22}
\leq \sum_{\substack{\textbf{r} \in\mathcal{D}_{k}\\ r_{i_{j}}=1,\forall j=1,...,m+1\\ \textbf{r}_{0}\in \mathcal{E}_{m}\\ (r,r_{0})=1 }}\frac{y_{\textbf{r},\textbf{r}_{0}}^{2}}{\varphi_{\omega}(rr_{0})^{2}}\sum_{\substack{\textbf{s} \in\mathcal{D}_{k}\\s_{i_{j}}=1,\forall j=1,...,m+1\\ \textbf{s}_{0}\in \mathcal{E}_{m}\\ ss_{0}=rr_{0}}}\prod_{p|rr_{0}}|S_{p}(\textbf{r},\textbf{r}_{0},\textbf{s},\textbf{s}_{0})|
\end{equation}
$$\leq \sum_{\substack{\textbf{r} \in\mathcal{D}_{k}\\r_{i_{j}}=1,\forall j=1,...,m+1\\ \textbf{r}_{0}\in \mathcal{E}_{m}\\ (r,r_{0})=1 }}y_{\textbf{r},\textbf{r}_{0}}^{2}\prod_{p|rr_{0}}\frac{p-2+\omega(p)-1}{(p-\omega(p))^{2}•}=\sum_{\substack{\textbf{r} \in\mathcal{D}_{k}\\r_{i_{j}}=1,\forall j=1,...,m+1\\ \textbf{r}_{0}\in \mathcal{E}_{m}\\ (r,r_{0})=1 }}y_{\textbf{r},\textbf{r}_{0}}^{2}\prod_{p|rr_{0}}\frac{p+\omega(p)-3}{(p-\omega(p))^{2}•}$$
$$=\sum_{\substack{\textbf{r} \in\mathcal{D}_{k}\\r_{i_{j}}=1,\forall j=1,...,m+1\\ \textbf{r}_{0}\in \mathcal{E}_{m}\\ (r,r_{0})=1 }}\frac{y_{\textbf{r},\textbf{r}_{0}}^{2}}{\prod_{p|rr_{0}}(p+O(k))}.$$
In fact, suppose that $p|rr_{0}$. Then there is just one component among those of $\textbf{s}$ and $\textbf{s}_{0}$ which can be a multiple of $p|(\textbf{r},\textbf{s})(\textbf{r}_{0},\textbf{s}_{0})$ and at most $\omega(p)-1$ possibilities for the components of $\textbf{s},\textbf{s}_{0}$ which can be a multiple of $p\nmid(\textbf{r},\textbf{s})(\textbf{r}_{0},\textbf{s}_{0})$. Indeed, we have exactly $\omega(p)$ indices $i$, with $i\in \{1,...,k\}$, for which $p\nmid W_{i}$ and therefore at most $\omega(p)$ possibilities among the components of $\textbf{s}$ and $\textbf{s}_{0}$ to be multiples of $p$, with just one exception in correspondence of the unique index for which $p|rr_{0}$.

Note that, since $p|rr_{0}$ then $p\nmid WB$ and consequently $p>2k^{2}$. If we take $k$ sufficiently large, we may suppose that $p+O(k)>0$. In order to evaluate the final sum in \eqref{eq: 4.22} we link $y_{\textbf{r},\textbf{r}_{0}}$ with $y^{(m)}_{\textbf{r}}$ and $y_{\textbf{r}_{0}}$ in the following lemma.
\begin{lem}
\label{lem 4.2}
We have 
\begin{equation}
\label{eq: 4.23}
|y_{\textbf{r},\textbf{r}_{0}}|\ll y_{\textbf{r}_{0}}y^{(m)}_{\textbf{r}}\prod_{p|r_{0}}\frac{p}{p-1}\prod_{p|r,p\nmid W_{i_{1}}}\frac{p}{p-1}\cdots \prod_{p|r,p\nmid W_{i_{m}}}\frac{p}{p-1}.
\end{equation}
\end{lem}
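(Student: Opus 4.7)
The plan is to invert the definitions \eqref{eq:4.9} and \eqref{eq:4.10} of $y_{\textbf{r}_{0}}$ and $y^{(m)}_{\textbf{r}}$, insert the resulting expressions for $\wt\lambda_{\textbf{e}}$ and $\lambda_{\textbf{d}}$ into \eqref{eq:4.8}, and then estimate the resulting combinatorial sum prime by prime.

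By Möbius inversion of \eqref{eq:4.9} and \eqref{eq:4.10} (compare \eqref{eq: 4.12}) one has
\begin{equation*}
\wt\lambda_{\textbf{e}}=\varphi(e)\mu(e)\sum_{\textbf{e}|\textbf{r}_{0}'\in\mathcal{E}_{m}}\frac{y_{\textbf{r}_{0}'}}{\varphi(r_{0}')},\qquad \lambda_{\textbf{d}}=\varphi(d)\mu(d)\sum_{\substack{\textbf{d}|\textbf{r}'\in\mathcal{D}_{k}\\ r'_{i_{j}}=1,\,\forall j}}\frac{y^{(m)}_{\textbf{r}'}}{\varphi_{\omega}(r')}.
\end{equation*}
Substituting into \eqref{eq:4.8} and swapping the order of summation yields
\begin{equation*}
y_{\textbf{r},\textbf{r}_{0}}=\mu(rr_{0})\varphi_{\omega}(rr_{0})\sum_{\textbf{r}',\textbf{r}_{0}'}\frac{y^{(m)}_{\textbf{r}'}\,y_{\textbf{r}_{0}'}}{\varphi_{\omega}(r')\varphi(r_{0}')}\,M(\textbf{r},\textbf{r}_{0};\textbf{r}',\textbf{r}_{0}'),
\end{equation*}
where $M:=\sum\mu(d)\mu(e)$ is the combinatorial sum over $\textbf{d},\textbf{e}$ with $\textbf{r}|\textbf{d}|\textbf{r}'$, $\textbf{r}_{0}|\textbf{e}|\textbf{r}_{0}'$, $(d,e)=1$, and $d_{i_{j}}=1$ for all $j$.

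Next, I would analyze $M$ prime by prime. The ``diagonal'' choice $\textbf{r}'=\textbf{r}$, $\textbf{r}_{0}'=\textbf{r}_{0}$ gives $M=\mu(r)\mu(r_{0})$ and yields the clean term $(\varphi_{\omega}(r_{0})/\varphi(r_{0}))y^{(m)}_{\textbf{r}}y_{\textbf{r}_{0}}$, already bounded by $y^{(m)}_{\textbf{r}}y_{\textbf{r}_{0}}$. For off-diagonal terms, the usual Möbius cancellation in $\sum_{r_{i}|d_{i}|r'_{i}}\mu(d_{i})$ (which would normally force $r'_{i}=r_{i}$) is broken only by the bilinear coupling $(d,e)=1$: a prime $p$ dividing $r'r_{0}'$ but not $rr_{0}$ must be placed in exactly one of $\textbf{d}$ or $\textbf{e}$, contributing an $O(1)$ local factor. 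Taking absolute values and exploiting the fact that $y_{\textbf{r}_{0}'}=y_{\textbf{r}_{0}}$ is the positive constant chosen in the proof of Lemma \ref{lem 4.1}, and bounding $y^{(m)}_{\textbf{r}'}$ in terms of $y^{(m)}_{\textbf{r}}$ via the explicit form of $F$ in \eqref{eq:3.8}, the off-diagonal sum factorises into an Eulerian product.

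Each prime $p|r_{0}$ (already forced into $\textbf{e}$) forbids $p|d$ through the Möbius expansion of $(d,e)=1$ and contributes a local factor $p/(p-1)$. Each prime $p|r$, on the other hand, can be matched against at most $m$ positions in $\textbf{e}$, one for each $j\in\{1,\dots,m\}$ with $p\nmid W_{i_{j}}$ (the support constraint of $\mathcal{E}_{m}$), each generating a factor $p/(p-1)$. Together these give the two groups of products in \eqref{eq: 4.23}. The main technical obstacle is precisely this combinatorial bookkeeping: verifying that the prime-by-prime local factors assemble into the exact product structure claimed, and that the support restrictions from $\mathcal{D}_{k}$ (the condition $(d_{i},W_{i})=1$) and $\mathcal{E}_{m}$ (the condition $(e_{j},W_{i_{j}})=1$) correctly produce the $m$ separate products $\prod_{p|r,\,p\nmid W_{i_{j}}}p/(p-1)$.
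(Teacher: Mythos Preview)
Your overall strategy---inverting \eqref{eq:4.9} and \eqref{eq:4.10}, substituting into \eqref{eq:4.8}, swapping summation, and analysing the result prime by prime---is exactly the paper's approach. However, your description of the inner M\"obius sum $M$ and of where the $p/(p-1)$ factors originate is wrong, and this is precisely where the substance of the lemma lies.

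The structural fact you miss is that for a prime $p\nmid rr_0$ with $p\mid r'r_0'$, the local contribution to $M$ is not merely ``$O(1)$''; it is \emph{zero} unless $p$ divides \emph{both} $r'$ and $r_0'$. If $p\mid r_i'$ but $p\nmid r_0'$, the free M\"obius sum over the $p$-part of $d_i$ is untouched by $(d,e)=1$ (since $p\nmid e$ automatically), and collapses to $1+\mu(p)=0$. Only when $p\mid r_i'$ and $p\mid r_{0_j}'$ simultaneously does the coupling exclude $(p,p)$ and leave $1-1-1=-1$. This forces the parts of $r'$ and $r_0'$ coprime to $rr_0$ to agree as integers (the paper's decomposition $f_i=r_if_i'g_i$, $f_{0_j}=r_{0_j}f_{0_j}'g_{0_j}$ with $f'=f_0'$), and without it the outer sum is not controllable.

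With that constraint in hand, the $p/(p-1)$ factors arise from the \emph{outer} summation over $\textbf{r}',\textbf{r}_0'$, not from $M$. For $p\mid r_0$: since $p\mid e$ is forced and hence $p\nmid d$, the M\"obius cancellation in each $d_i$ is broken, so the outer variable $r_i'$ may freely carry $p$ in any of the $\omega(p)$ admissible components; weighted by $1/\varphi_\omega$ and combined with the prefactor $\varphi_\omega(r_0)/\varphi(r_0)$, this yields $\prod_{p\mid r_0}p/(p-1)$. Symmetrically, for $p\mid r$ the outer variable $r_{0_j}'$ may carry $p$ in each coordinate $j$ with $p\nmid W_{i_j}$, weighted by $1/\varphi$, producing the $m$ separate products $\prod_{p\mid r,\,p\nmid W_{i_j}}p/(p-1)$. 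Your attribution of these factors to ``forbids $p\mid d$'' and to ``positions in $\textbf{e}$'' conflates the inner variables $\textbf{d},\textbf{e}$ (over which one takes a signed sum that is $0$ or $\pm1$) with the outer variables $\textbf{r}',\textbf{r}_0'$ (whose residual freedom actually generates the Euler factors).
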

\begin{proof}
Inserting the Moebius inversion of \eqref{eq:4.9} and \eqref{eq:4.10} into \eqref{eq:4.8}, we may rewrite $y_{\textbf{r},\textbf{r}_{0}}$ as  
\begin{equation}
\label{eq: 4.24}
y_{\textbf{r},\textbf{r}_{0}}=\mu(rr_{0})\varphi_{\omega}(rr_{0})\sum_{\substack{\textbf{r}|\textbf{d}\\ \textbf{r}_{0}|\textbf{e}\\ (d,e)=1\\ d_{i_{j}}=1,\forall j=1,...,m+1}}\mu(de)\sum_{\textbf{d}|\textbf{f}}\frac{y_{\textbf{f}}^{(m)}}{\varphi_{\omega}(f)}\sum_{\textbf{e}|\textbf{f}_{0}}\frac{y_{\textbf{f}_{0}}}{\varphi(f_{0})}.
\end{equation}
Swapping the order of summation, it becomes
\begin{equation}
\label{eq: 4.25}
\mu(rr_{0})\varphi_{\omega}(rr_{0})\sum_{\substack{\textbf{r}|\textbf{f}\\ \textbf{r}_{0}|\textbf{f}_{0}}}\frac{y_{\textbf{f}}^{(m)}}{\varphi_{\omega}(f)}\frac{y_{\textbf{f}_{0}}}{\varphi(f_{0})}\sum_{\substack{\textbf{r}|\textbf{d}|\textbf{f}\\ \textbf{r}_{0}|\textbf{e}|\textbf{f}_{0} \\(d,e)=1\\ d_{i_{j}}=1,\forall j=1,...,m+1}}\mu(de).
\end{equation}
Using the fact that each $y_{\textbf{f}_{0}}$ is constant and the function $F$ is non-increasing by \eqref{eq:3.8}, we have 
\begin{equation}
\label{eq: 4.26}
|y_{\textbf{r},\textbf{r}_{0}}|\leq \mu^{2}(rr_{0})\varphi_{\omega}(rr_{0})y_{\textbf{r}_{0}}y_{\textbf{r}}^{(m)}\sum_{\substack{\textbf{r}_{0}|\textbf{f}_{0}\in \mathcal{E}_{m}}}\sum_{\substack{\textbf{r}|\textbf{f}\in\mathcal{D}_{k}\\ p|[f,f_{0}], p\nmid (f,f_{0})\Rightarrow p|rr_{0}}}\frac{1}{\varphi_{\omega}(f)\varphi(f_{0})},
\end{equation}
because the inner sum in \eqref{eq: 4.25} is $0$ unless every prime dividing one of $f,f_{0}$ but not the other is a divisor of $rr_{0}$. In this case the sum is $\pm 1$.
We let $f_{i}=r_{i}f_{i}'g_{i}$, with $f_{i}'=\frac{f_{i}}{(f_{i},rr_{0})•}$ and $g_{i}|r_{0}, \forall i=1,...,k$. Moreover, we let $f_{0_{j}}=r_{0_{j}}f_{0_{j}}'g_{0_{j}}$, with $f_{0_{j}}'=\frac{f_{0_{j}}}{(f_{0_{j}},rr_{0})•}$ and $g_{0_{j}}|r,\forall j=1,...,m$. We see the constraint $p|\frac{[f,f_{0}]}{(f,f_{0})}\Rightarrow p|rr_{0}$ means that $f_{0}'=\prod_{j=1}^{m}f_{0_{j}}'=\prod_{i=1}^{k}f_{i}'=f'$. Therefore, we can bound the double sum in \eqref{eq: 4.26} with
\begin{equation}
\label{eq: 4.27}
\frac{1}{\varphi(r_{0})\varphi_{\omega•}(r)}\sum_{\textbf{f}'\in\mathcal{D}_{k}}\frac{\tau_{m}(f')}{\varphi(f')\varphi_{\omega•}(f')}\sum_{\substack{\textbf{g}\in\mathcal{D}_{k} \\ g_{i}|r_{0},\forall i=1,...,k}}\frac{1}{\varphi_{\omega}(g)•}
\sum_{\substack{\textbf{g}_{0}\in\mathcal{E}_{m} \\ g_{0_{j}}|r,\forall j=1,...,m}}\frac{1}{\varphi(g_{0})•}
\end{equation}
$$\leq \frac{1}{\varphi(r_{0})\varphi_{\omega•}(r)}\prod_{p\nmid WB}\bigg(1+\frac{m\omega(p)}{(p-1)(p-\omega(p))}\bigg)\prod_{p|r_{0}}\bigg(1+\frac{\omega(p)}{p-\omega(p)}\bigg) \prod_{p|r,p\nmid W_{i_{1}}}\bigg(1+\frac{1}{p-1}\bigg)\cdots \prod_{p|r,p\nmid W_{i_{m}}}\bigg(1+\frac{1}{p-1}\bigg).$$
The first product is $O(1)$ since it is over primes $p>2k^{2}$ and $k>m$. Thus, we have
\begin{equation}
\label{eq: 4.28}
|y_{\textbf{r},\textbf{r}_{0}}|\ll y_{\textbf{r}_{0}}y_{\textbf{r}}^{(m)}\frac{\mu^{2}(rr_{0})\varphi_{\omega}(rr_{0})}{\varphi(r_{0})\varphi_{\omega}(r)} \prod_{p|r_{0}}\bigg(1+\frac{\omega(p)}{p-\omega(p)}\bigg)\prod_{p|r,p\nmid W_{i_{1}}}\bigg(1+\frac{1}{p-1}\bigg)\cdots \prod_{p|r,p\nmid W_{i_{m}}}\bigg(1+\frac{1}{p-1}\bigg).
\end{equation}
We note that $y_{\textbf{r}_{0}}y_{\textbf{r}}^{(m)}$ is multiplied by
\begin{equation}
\label{eq: 4.29}
\frac{\mu^{2}(rr_{0})\prod_{p|rr_{0}}(p-\omega(p))}{\prod_{p|r_{0}}(p-1)\prod_{p|r}(p-\omega(p))}\prod_{p|r_{0}}\bigg(\frac{p}{p-\omega(p)}\bigg)\prod_{p|r,p\nmid W_{i_{1}}}\bigg(1+\frac{1}{p-1}\bigg)\cdots \prod_{p|r,p\nmid W_{i_{m}}}\bigg(1+\frac{1}{p-1}\bigg)
\end{equation}
$$\leq\prod_{p|r_{0}}\frac{p}{p-1}\prod_{p|r,p\nmid W_{i_{1}}}\frac{p}{p-1}\cdots \prod_{p|r,p\nmid W_{i_{m}}}\frac{p}{p-1},$$
because $(r,r_{0})=1$.
\end{proof}
By Lemma 4.2, the last sum in \eqref{eq: 4.22} may be bounded by 
\begin{equation}
\label{eq: 4.30}
\ll \sum_{\textbf{r}_{0}\in\mathcal{E}_{m}}\frac{y_{\textbf{r}_{0}}^{2}}{\prod_{p|r_{0}}(p+O(k))(1-\frac{1}{p•})^{2}} \sum_{\substack{\textbf{r}\in\mathcal{D}_{k}\\ r_{i_{j}}=1, \forall j=1,...,m+1}}\frac{(y_{\textbf{r}}^{(m)})^{2}}{\prod_{p|r}(p+O(k))h(r)^{2}},
\end{equation}
where we have put
\begin{equation}
\label{eq: 4.31}
h(r)=\prod_{p|r,p\nmid W_{i_{1}}}\bigg(1-\frac{1}{p}\bigg)\cdots \prod_{p|r,p\nmid W_{i_{m}}}\bigg(1-\frac{1}{p}\bigg).
\end{equation}
Now we want to find an estimate on $y_{\textbf{r}}^{(m)}$. We provide it in the next lemma.
\begin{lem} 
\label{lem: 4.3}
We have 
\begin{equation}
\label{eq: 4.32}
y_{\textbf{r}}^{(m)}\ll\left(\frac{\phi(r)}{r}\right)^{m}\left(\frac{WB}{\phi(WB)}\right)^{k-m-1}\mathfrak{S}_{WB}(\mathcal{L})(\log R)^{m}\left((\log R)H'+T_{k}(\log \log R)^{2}H''\right),
\end{equation}
where $H'$ and $H''$ are the integrals in $dt_{i_{1}}\cdots dt_{i_{m+1}}$ of $F_{1}$ and $F_{2}$ respectively, which are evaluated in $(\log r_{i})/(\log R)$ in every positions $i\neq i_{1},...,i_{m+1}$ and $t_{i_{j}}$ elsewhere.
\end{lem}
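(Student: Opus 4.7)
The plan is to start from the definition \eqref{eq:4.10} of $y_{\mathbf{r}}^{(m)}$, substitute the formula \eqref{eq:3.7} for $\lambda_{\mathbf{d}}$, and swap the order of summation to obtain
\begin{equation*}
y^{(m)}_{\mathbf{r}}=\mu(r)\phi_{\omega}(r)\sum_{\substack{\mathbf{r}|\mathbf{r'}\\ \mathbf{r'}\in\mathcal{D}_{k}}}\frac{y_{\mathbf{r'}}}{\phi_{\omega}(r')}\sum_{\substack{\mathbf{r}|\mathbf{d}|\mathbf{r'}\\ d_{i_{j}}=1,\ \forall j}}\frac{\mu(d)d}{\phi(d)}.
\end{equation*}
Since $\mathbf{r'}\in\mathcal{D}_k$ is squarefree, the inner sum over $\mathbf{d}$ factors over the primes $p|r'$: each prime dividing some $r'_{i_{j}}$ contributes $1$ (because $d_{i_{j}}=1$), each prime dividing $r_{i}$ with $i\notin\{i_{1},\dots,i_{m+1}\}$ contributes $-p/(p-1)$, and each prime dividing $r'_{i}/r_{i}$ with $i\notin\{i_{1},\dots,i_{m+1}\}$ contributes $-1/(p-1)$. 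After using $\mu(r)^{2}=1$, this collapses to
\begin{equation*}
y^{(m)}_{\mathbf{r}}=\frac{r\,\phi_{\omega}(r)}{\phi(r)}\sum_{\substack{\mathbf{r'}\in\mathcal{D}_{k}\\ r_{i}|r'_{i},\ i\notin\{i_{j}\}}}\frac{y_{\mathbf{r'}}}{\phi_{\omega}(r')}\prod_{\substack{p|r'_{i}/r_{i}\\ i\notin\{i_{j}\}}}\frac{-1}{p-1}.
\end{equation*}

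Next, I would substitute the definition \eqref{eq:3.7} of $y_{\mathbf{r'}}$ and split the $k$ coordinates into the $m+1$ \emph{free} indices $i_{j}$ and the $k-m-1$ \emph{frozen} indices $i\notin\{i_{j}\}$. For the frozen coordinates, writing $r'_{i}=r_{i}u_{i}$ and summing over squarefree $u_{i}$ coprime to $r_{i}WB$ with the M\"obius weight $-1/(p-1)$ localises $r'_{i}$ near $r_{i}$, up to an error controlled by the smoothness of $F$. Collecting the local contributions at every prime via standard Mertens-type Euler-product manipulations produces the multiplicative prefactor $\mathfrak{S}_{WB}(\mathcal{L})(WB/\phi(WB))^{k-m-1}(\phi(r)/r)^{m}$, where the exponent $m$ arises from the joint interaction between the initial factor $r\phi_{\omega}(r)/\phi(r)$, the localising products and the remaining free summations.

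For each free coordinate $i_{j}$, the sum $\sum_{r'_{i_{j}}}F(\dots)/\phi_{\omega}(r'_{i_{j}})$ is converted to an integral in $t_{i_{j}}=\log r'_{i_{j}}/\log R$ via Perron's formula (or the analogue used by Maynard in \cite{M2}), producing a factor $\log R$ per conversion and $(\log R)^{m+1}$ in total. The delicate point is that $F$ does not factorise as a product, because of the coupling factor $\psi(\sum_{i}t_{i})$ that mixes the integration variables with the frozen ones. Bounding the coupling factor trivially by $1$ gives $F\leq F_{1}$ and hence the leading contribution $(\log R)H'$ in the bracketed expression. Expanding instead $\psi(\sum_{i}t_{i})$ or one of the rational weights $1/(1+T_{k}t_{i})$ produces derivatives carrying the scale $T_{k}=k\log k$ and an absorption factor $(\log\log R)^{2}$ from integration-by-parts type estimates; this yields the secondary term $T_{k}(\log\log R)^{2}H''$ in which $F_{2}$ replaces $F_{1}$, matching the definitions in \eqref{eq:3.10}.

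The main obstacle will be the simultaneous bookkeeping of three structural features of the sum: the divisor weight $1/\phi_{\omega}(r'_{i})$, the M\"obius factors localising the frozen coordinates, and the non-product coupling $\psi(\sum_{i}t_{i})$ that prevents a clean factorisation of $F$. One has to separate cleanly the leading integral contribution (producing $H'$) from the first-order correction (producing $H''$), whilst tracking uniformly the dependence on $k$, $\mathbf{r}$ and the $W_{i}$ so that the final estimate holds throughout the admissible range of the parameters. Once all these contributions are assembled, combining the prefactor from the frozen coordinates with the $(\log R)^{m+1}$ scaling of the free integrations yields precisely the stated bound.
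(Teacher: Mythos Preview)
Your opening moves match the paper's: substitute \eqref{eq:3.7} into \eqref{eq:4.10}, swap sums, and factor the inner $\sum_{\mathbf{r}|\mathbf{d}|\mathbf{r'}}\mu(d)d/\phi(d)$ over primes; the paper then approximates $y_{\mathbf{e}}$ by its value at the vector $\mathbf{r'}$ with frozen coordinates $r_i$ and free coordinates $e_{i_j}$ via \cite[Lemma~8.2]{M2}, and converts the $(m+1)$ free sums to integrals via \cite[Lemma~8.4]{M2}. Your identification of the main term (bounding $F\le F_1$ to produce $(\log R)H'$) is exactly what the paper does in \eqref{eq: 4.43}.

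The gap is in your account of the secondary term $T_k(\log\log R)^2H''$. This does \emph{not} arise from ``integration-by-parts type estimates'' on $\psi$ or on $1/(1+T_kt_i)$. In the paper, the factor $T_k$ and the function $F_2$ (hence $H''$) enter through the error in \cite[Lemma~8.2]{M2}, namely $y_{\mathbf{e}}=y_{\mathbf{r'}}+O\bigl(T_kY_{\mathbf{r'}}\log(st)/\log R\bigr)$, where $Y_{\mathbf{r'}}$ is built from $F_2$. The factor $(\log\log R)^2$ is of purely arithmetic origin and stems from a structural feature your sketch omits entirely: immediately before this lemma, the paper restricts the support of $\lambda_{\mathbf{d}}$ from $\mathcal{D}_k$ to the smaller set $\mathcal{D}_k'$ defined via \eqref{eq: 4.19}--\eqref{eq: 4.20}, using the primality of $n+h_{i_j}$. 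Under that restriction, primes $p\mid W_i'/W_i$ (i.e.\ $p\mid(h_{i_1}-h_i,\dots,h_{i_{m+1}}-h_i)$ with $p\nmid W_i$) contribute $S_p'=1$, not $-1/(p-1)$, so your assertion that ``each prime dividing $r'_i/r_i$ with $i\notin\{i_j\}$ contributes $-1/(p-1)$'' is no longer valid. The paper accordingly splits each frozen $e_i/r_i=s_it_i$ with $t_i\mid W_i'/W_i$; the sum over $t\mid\Delta=\prod_{i\neq i_1,\dots,i_{m+1}}(h_{i_1}-h_i,\dots,h_{i_{m+1}}-h_i)$ is then bounded by
\[
\sum_{t\mid\Delta}\frac{1+\log t}{\phi_\omega(t)}\ll(\log\log\Delta)^2\ll(\log\log R)^2,
\]
and this is precisely where the double logarithm appears. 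Without the $\mathcal{D}_k'$ restriction and the $s,t$-splitting, your localisation step is incomplete, and the mechanism you propose would not produce the stated $(\log\log R)^2$.
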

\begin{proof}
Substituting \eqref{eq:3.7} in \eqref{eq:4.10}, we get
\begin{equation}
\label{eq: 4.33}
y_{\textbf{r}}^{(m)}=\mu(r)\varphi_{\omega}(r)\sum_{\substack{\textbf{r}|\textbf{d}\\ d_{i_{j}}=1, \forall j=1,...,m+1}}\frac{\lambda_{\textbf{d}}}{•\varphi(d)}=\mu(r)\varphi_{\omega}(r)\sum_{\substack{\textbf{r}|\textbf{d}\\ d_{i_{j}}=1, \forall j=1,...,m+1}}\frac{d\mu(d)}{\varphi(d)•}\sum_{\textbf{d}|\textbf{e}}\frac{y_{\textbf{e}}}{\varphi_{\omega}(e)•}
\end{equation}
$$=\mu(r)\varphi_{\omega}(r)\sum_{\textbf{r}|\textbf{e}}\frac{y_{\textbf{e}}}{\varphi_{\omega}(e)•}\sum_{\substack{\textbf{r}|\textbf{d}|\textbf{e}\\ d_{i_{j}}=1, \forall j=1,...,m+1}}\frac{d\mu(d)}{\varphi(d)•}=\frac{\mu^{2}(r)r\varphi_{\omega}(r)}{•\varphi(r)}\sum_{\textbf{r}|\textbf{e}}\frac{y_{\textbf{e}}}{\varphi_{\omega}(e)•}\sum_{\substack{\textbf{f}|\textbf{e}/\textbf{r}\\ f_{i_{j}}=1, \forall j=1,...,m+1}}\frac{f\mu(f)}{\varphi(f)•}$$
$$=\frac{\mu^{2}(r)r\varphi_{\omega}(r)}{•\varphi(r)}\sum_{\textbf{r}|\textbf{e}}\frac{y_{\textbf{e}}}{\varphi_{\omega}(e)•}\prod_{p|e/r}S_{p}'(\textbf{e},\textbf{r}).$$
Here we define $S_{p}'(\textbf{e},\textbf{r})=1$, when $p|e_{j}$ for $j\in\{i_{1},...,i_{m+1}\}$, otherwise $p|e_{j}/r_{j}$ and we put 
\[S_{p}'(\textbf{e},\textbf{r})=\sum_{\substack{\textbf{f}|\textbf{e}/\textbf{r}\\ \textbf{f}\in\mathcal{D}_{k}'\\ f_{i_{j}}=1, \forall j=1,...,m+1\\ [f_{1},...,f_{k}]|p}}\frac{f\mu(f)}{\varphi(f)•}= \left\{ \begin{array}{ll}
         -\frac{1}{p-1} & \mbox{if $p\nmid W_{j}'$};\\
        1 & \mbox{if $p|W_{j}'/W_{j}$}.\end{array} \right. \] 
Now, we let $e_{j}=r_{j}s_{j}t_{j}$ for each $j\neq i_{1},...,i_{m+1}$, where $s_{j}$ is the product of the primes dividing $e_{j}/r_{j}$ but not $W_{j}'$ and $t_{j}$ is the product of primes dividing both $e_{j}/r_{j}$ and $W_{j}'/W_{j}$. We put $s_{i_{j}}=t_{i_{j}}=1$ for every $j\in\{1,...,m+1\}$ and consider the relative $e_{i_{j}}$ separately. For $\textbf{e}\in\mathcal{D}_{k}$ the product $\prod_{p|e/r}S_{p}'(\textbf{e},\textbf{r})$ is then $\mu(s)/\varphi(s)$.
We let $\textbf{r}'$ the vector $\textbf{r}$ in which $r_{i_{j}}$ is replaced with $e_{i_{j}}$, for every component $i_{1},...,i_{m+1}$. By \cite[Lemma 8.2]{M2}, we obtain the following relation
\begin{equation}
\label{eq: 4.34}
y_{\textbf{e}}=y_{\textbf{r}'}+O\bigg(T_{k}Y_{\textbf{r}'}\frac{\log(st)}{\log R•}\bigg),
\end{equation}
where 
$$Y_{\textbf{r}'}=\frac{W^{k}B^{k}\mathfrak{S}_{WB}(\mathcal{L})}{\varphi(WB)^{k}}F_{2}\left(\frac{\log r'_{1}}{\log R•},...,\frac{\log r'_{k}}{\log R•}\right),$$
with $F_{2}$ as in \eqref{eq:3.10}. Inserting this in the last line of \eqref{eq: 4.33}, we obtain 
\begin{equation}
\label{eq: 4.35}
y_{\textbf{r}}^{(m)}=\frac{r}{\varphi(r)•}\sum_{e_{i_{1}},...,e_{i_{m+1}}}\frac{y_{\textbf{r}'}}{\varphi_{\omega}(e)}\sum_{\textbf{s},\textbf{t}}\frac{\mu(s)}{\varphi(s)\varphi_{\omega}(st)•}+O\bigg(\frac{T_{k}}{\log R}\frac{r}{\varphi(r)•}\sum_{e_{i_{1}},...,e_{i_{m+1}}}\frac{Y_{\textbf{r}'}}{\varphi_{\omega}(e)}\sum_{\textbf{s},\textbf{t}}\frac{\log(st)}{\varphi(s)\varphi_{\omega}(st)•} \bigg),
\end{equation}
where now we indicate with $e$ the product $e_{i_{1}}\cdots e_{i_{m+1}}$ and the inner sum is over $\textbf{s}\in\mathcal{D}_{k}', \textbf{t}\in\mathcal{D}_{k}$ subject to $s_{i_{j}}=t_{i_{j}}=1$, for any $j=1,...,m+1$, with $(s,t)=(st, re_{i_{1}}\cdots e_{i_{m+1}})=1$ and $t_{j}|W_{j}'/W_{j}$.

We concentrate first on the main term. We clearly have 
\begin{equation}
\label{eq: 4.36}
\sideset{}{'}\sum_{\textbf{s},\textbf{t}}\frac{\mu(s)}{\varphi(s)\varphi_{\omega}(st)•}=\prod_{p}\sideset{}{'}\sum_{\substack{\textbf{s},\textbf{t}\\ s_{i}|p,t_{i}|p, \forall i}}\frac{\mu(s)}{\varphi(s)\varphi_{\omega}(st)•},
\end{equation}
where $\Sigma'$ means $s_{i_{j}}=t_{i_{j}}=1,$ for any $j\in\{1,...,m+1\}$, $(s,t)=1$, $(s_{i}, W_{i}'re)=(t_{i}, W_{i}re)=1$ and $t_{i}|W_{i}'/W_{i},$ for every $i=1,...,k$. Therefore, we may bound \eqref{eq: 4.36} with
\begin{equation}
\label{eq: 4.37} 
\leq \prod_{p\nmid (W_{i_{1}},...,W_{i_{m+1}})re} \bigg( 1-\frac{\omega(p)-l(p)}{(p-1)(p-\omega(p))•}\bigg)\prod_{\substack{p| (W_{i_{1}},...,W_{i_{m+1}})\\ p\nmid WBr}} \bigg( 1-\frac{\omega(p)-1}{(p-1)(p-\omega(p))•}+\frac{1}{p-\omega(p)•}\bigg),
\end{equation}
where $l(p)=\#\{j\in\{1,...,m+1\}: p\nmid W_{i_{j}}\}$.

In fact, if $p|WBre$ there are no components of $\textbf{s},\textbf{t}$ which can be a multiple of $p$. If $p\nmid WBre$, we have exactly $\omega(p)$ indices $i$ for which $p\nmid W_{i}$. In the case in which $p|(W_{i_{1}},...,W_{i_{m+1}})$, we have exactly $\omega(p)-1$ indices such that $p\nmid W_{i}'$, since we should not consider the chosen index for the residue classes $-h_{i_{1}}\equiv ...\equiv -h_{i_{m+1}}\pmod {p}$. On the other hand, when $p\nmid (W_{i_{1}},...,W_{i_{m+1}})$, among such $\omega(p)$ indices we might count those $i_{j}$ for which $p\nmid W_{i_{j}}$ (and since in this case $s_{i_{j}}=1$), we find at least $\omega(p)-l(p)\geq 0$ components of $\textbf{s}$ that can be a multiple of $p$.\\
If $p\nmid (W_{i_{1}},...,W_{i_{m+1}})re$, then no components of $\textbf{t}$ can be a multiple of $p$, since $p\nmid W_{i}'/W_{i}$ for each $i$.\\ On the other hand, if $p| (W_{i_{1}},...,W_{i_{m+1}})$ and $ p\nmid WBr$, then exactly one component of $\textbf{t}$ can be a multiple of $p$, which is the unique $i$ such that $p|W_{i}'/W_{i}$. Finally, since $(s,t)=1$, no component of $\textbf{s}$ can be a multiple of $p$ if $t$ it is.

We can split \eqref{eq: 4.37} further to:
\begin{equation}
\label{eq: 4.38}
\prod_{p\nmid (W_{i_{1}},...,W_{i_{m+1}})r} \bigg( 1-\frac{\omega(p)-l(p)}{(p-1)(p-\omega(p))•}\bigg)\prod_{p|e} \bigg( 1-\frac{\omega(p)-l(p)}{(p-1)(p-\omega(p)•}\bigg)^{-1}\prod_{\substack{p| (W_{i_{1}},...,W_{i_{m+1}})\\ p\nmid WBr}}\frac{p}{p-1},
\end{equation}
since $(e, r(W_{i_{1}},...,W_{i_{m+1}}))=1$ and 
$$1-\frac{\omega(p)-1}{(p-1)(p-\omega(p))•}+\frac{1}{p-\omega(p)}=\frac{p}{p-1•}.$$
Observe that the first product in \eqref{eq: 4.38} is $\ll 1$, since it is in particular over primes $p\nmid WB$. Inserting \eqref{eq: 4.38} in \eqref{eq: 4.35}, the main term becomes
\begin{equation}
\label{eq: 4.39}
\ll\frac{r}{\phi(r)•}\prod_{\substack{p| (W_{i_{1}},...,W_{i_{m+1}})\\ p\nmid WBr}}\frac{p}{p-1}\sum_{e_{i_{1}},...,e_{i_{m+1}}}\frac{y_{\textbf{r}'}}{g(e)},
\end{equation}
where
$$g(e)=\varphi_{\omega}(e)\prod_{p|e} \bigg( 1-\frac{\omega(p)-l(p)}{(p-1)(p-\omega(p))•}\bigg)$$ 
and it is easily to see that $g(p)=p+O(k)$. Substituting \eqref{eq:3.7} in place of $y_{\textbf{r}'}$, we may write \eqref{eq: 4.39} as
\begin{equation}
\label{eq: 4.40}
\frac{(WB)^{k}\mathfrak{S}_{WB}(\mathcal{L})}{\varphi(WB)^{k}}\frac{r}{\phi(r)•}\prod_{\substack{p| (W_{i_{1}},...,W_{i_{m+1}})\\ p\nmid WBr}}\frac{p}{p-1}\sum_{\substack{e_{i_{1}},...,e_{i_{m+1}}\\ (e_{i_{j}}, rW_{i_{j}})=1,\forall j}}\frac{\tilde{F}}{g(e)},
\end{equation}  
where we indicate with $\tilde{F}$ the function $F$ evaluated in $(\log r_{i})/(\log R)$ in every position $i\neq i_{1},...,i_{m+1}$ and $(\log e_{i_{j}})/(\log R)$ in each $i_{j}$ position. We estimate the sum in \eqref{eq: 4.40} by \cite[Lemma 8.4]{M2}, taking the quantitiy $\Omega_{G}$ in the lemma as $O(kT_{k}^{2})$, and we end up with the following bound
\begin{equation}
\label{eq: 4.41}
\ll \frac{(WB)^{k}\mathfrak{S}_{WB}(\mathcal{L})(\log R)^{m+1}}{\varphi(WB)^{k}}\frac{r}{\phi(r)•}\prod_{\substack{p| (W_{i_{1}},...,W_{i_{m+1}})\\ p\nmid WBr}}\frac{p}{p-1}\prod_{p|r(W_{i_{1}},...,W_{i_{m+1}})}\bigg(1-\frac{1}{p•}\bigg)^{m+1}
\end{equation}
$$\times \prod_{\substack{p:\\ n(p)\geq 1}}\bigg( 1+\frac{n(p)}{g(p)•}\bigg)\bigg(1-\frac{1}{p•}\bigg)^{m+1}\bigg(H+O\bigg(k^{2}T_{k}^{2}\frac{\log\log R}{\log R•}H'\bigg)\bigg),$$
where $H$ and $H'$ are the integrals in $dt_{i_{1}}\cdots dt_{i_{m+1}}$ of $F$ and $F_{1}$ respectively, which are evaluated in $(\log r_{i})/(\log R)$ in every positions $i\neq i_{1},...,i_{m+1}$ and $t_{i_{j}}$ otherwise. Moreover, we have defined 
$$n(p)=\#\lbrace j\in\{1,...,m+1\}: p\nmid rW_{i_{j}}\rbrace.$$ 
We note that 
\begin{equation}
\label{eq: 4.42}
\prod_{\substack{p:\\ n(p)\geq 1}}\bigg( 1+\frac{n(p)}{g(p)•}\bigg)\bigg(1-\frac{1}{p•}\bigg)^{m+1}\ll 1,
\end{equation}
since $n(p)\leq m+1$ and the product is over primes $p>2k^{2}$. Furthermore, we manage the first product in \eqref{eq: 4.41} as
$$\prod_{\substack{p| (W_{i_{1}},...,W_{i_{m+1}})\\ p\nmid WBr}} \frac{p}{p-1}=\frac{\varphi(WB)}{WB•}\prod_{\substack{p| (W_{i_{1}},...,W_{i_{m+1}})\\ p\nmid r}} \frac{p}{p-1},$$
because $(WB, r)=1$ and $WB|(W_{i_{1}},...,W_{i_{m+1}})$, and the second product in \eqref{eq: 4.41} as
$$\prod_{p|r(W_{i_{1}},...,W_{i_{m+1}})}\bigg(1-\frac{1}{p•}\bigg)^{m+1}=\prod_{p|r}\bigg(1-\frac{1}{p•}\bigg)^{m+1}\prod_{\substack{p|(W_{i_{1}},...,W_{i_{m+1}})\\p\nmid r}}\bigg(1-\frac{1}{p•}\bigg)^{m+1}$$
$$=\bigg(\frac{\varphi(r)}{r•}\bigg)^{m+1}\prod_{\substack{p|(W_{i_{1}},...,W_{i_{m+1}})\\p\nmid r}}\bigg(1-\frac{1}{p•}\bigg)^{m+1}.$$
In particular, we observe that 
$$\frac{\varphi(WB)}{WB•}\prod_{\substack{p| (W_{i_{1}},...,W_{i_{m+1}})\\ p\nmid r}}\bigg(1-\frac{1}{p•}\bigg)^{m+1} \frac{p}{p-1}\ll \bigg(\frac{\varphi(WB)}{WB•}\bigg)^{m+1}.$$
Collecting our estimates, we deduce that the main term in \eqref{eq: 4.35} is
\begin{equation}
\label{eq: 4.43}
\ll\frac{r}{\varphi(r)•}\bigg(\frac{\varphi(r)}{r•}\bigg)^{m+1}\bigg(\frac{WB}{\varphi(WB)•}\bigg)^{k}\bigg(\frac{\varphi(WB)}{WB•}\bigg)^{m+1}\mathfrak{S}_{WB}(\mathcal{L})(\log R)^{m+1} \bigg(H+O\bigg(k^{2}T_{k}^{2}\frac{\log\log R}{\log R•}H'\bigg)\bigg)
\end{equation}
$$\ll \bigg(\frac{\varphi(r)}{r•}\bigg)^{m}\bigg(\frac{WB}{\varphi(WB)•}\bigg)^{k-m-1}\mathfrak{S}_{WB}(\mathcal{L})(\log R)^{m+1} H',$$
because clearly $F(t_{1},...,t_{k})\leq F_{1}(t_{1},...,t_{k})$, for every $k$-tuples $(t_{1},...,t_{k})$.

We now return to the error term in \eqref{eq: 4.35}. We use $\log (st)\ll \sqrt{s}(1+\log t)$ and we drop the requirement $(s,t)=1$. In this way, the sum over $s$ factorizes as an Euler product and we get
$$\sum_{\textbf{s}\in\mathcal{D}_{k}'}\frac{\sqrt{s}}{\phi(s)\phi_{\omega}(s)•}\leq \prod_{p>2k^{2}}\left(1+\frac{\sqrt{p}}{(p-1)(p-\omega(p))}\right)\ll 1.$$
We are summing over square-free $t$ with $(t, WBre)=1$ and 
$$t|\Delta=\prod_{\substack{i=1,...,k\\ i\neq i_{1},...,i_{m+1}}}(h_{i_{1}}-h_{i},...,h_{i_{m+1}}-h_{i}).$$
For every such $t$ there is at most one possible $\textbf{t}$. In fact, two cases may happen. If $p|\Delta$ and $p|(W_{i_{1}},...,W_{i_{m+1}})$, there exists a unique index $i\in\{1,...,k\}\setminus \{i_{1},...,i_{m+1}\}$ for which $p|W_{i}'/W_{i}$. It was the chosen index for the residue classes $-h_{i_{1}}\equiv \cdots \equiv -h_{i_{m+1}}\pmod{p}$. When this holds for every $p$ dividing $t$, it gives rise to a unique vector $\textbf{t}$. If otherwise $p|\Delta$ and $p\nmid (W_{i_{1}},...,W_{i_{m+1}})$, there exists an index $j\in\{1,...,m+1\}$ such that $i_{j}$ was the chosen index for the residue class $-h_{i_{j}}\pmod{p}$. In this case there is not any vector $\textbf{t}$.
Thus, the sum over $\textbf{t}$ contributes at most 
$$\sum_{\textbf{t}\in\mathcal{D}_{k}:\ t|\Delta}\frac{1+\sum_{p|t}\log p}{\phi_{\omega}(t)•}\ll\left(1+\sum_{p>2k^{2}:\ p|\Delta}\frac{\log p}{p•}\right)\prod_{p>2k^{2}:\ p|\Delta}\left(1+\frac{1}{\phi_{\omega}(p)}\right)\ll (\log\log \Delta)^{2}\ll (\log\log R)^{2}.$$
Therefore, the error term in \eqref{eq: 4.35} becomes
\begin{equation}
\label{eq: 4.44}
\ll \frac{T_{k}(\log \log R)^{2}}{\log R}\frac{r}{\phi(r)•}\sum_{\substack{e_{i_{1}},...,e_{i_{m+1}}\\ (e_{i_{j}}, WBr)=1, \forall j}}\frac{Y_{\textbf{r}'}}{\phi_{\omega}(e)•},
\end{equation}
relaxing the constraint $(e_{i_{j}}, rW_{i_{j}})=1$ to $(e_{i_{j}}, WBr)=1$.
Substituting the definition of $Y_{\textbf{r}'}$, we should estimate
\begin{equation}
\label{eq: 4.45}
\ll \frac{T_{k}(\log \log R)^{2}}{\log R}\frac{r}{\phi(r)•}\frac{(WB)^{k}\mathfrak{S}_{WB}(\mathcal{L})}{\phi(WB)^{k}•}\sum_{\substack{e_{i_{1}},...,e_{i_{m+1}}\\ (e_{i_{j}}, WBr)=1, \forall j}}\frac{\tilde{F}_{2}}{\phi_{\omega}(e)•},
\end{equation} 
where we have indicated with $\tilde{F}_{2}$ the function $F_{2}$ evaluated in $(\log r_{i})/(\log R)$ in every position $i\neq i_{1},...,i_{m+1}$ and $(\log e_{i_{j}})/(\log R)$ in each $i_{j}$ position. Now, since we can write 
$$F_{2}(t_{1},...,t_{k})=\sum_{j=1}^{k}G_{j}(t_{j})\prod_{\substack{1\leq i\leq k\\ i\neq j}}G_{i}(t_{i}),$$ 
for every $k$-tuples $(t_{1},...,t_{k})$, for certain functions $G_{1},...,G_{k}$, we apply a simple variation of the Lemma 8.4 in \cite{M2}, in which we allow for different functions $G_{i}$ instead of a single one, but verifying the same conditions present in the Lemma. In this way, we can bound \eqref{eq: 4.45} with 
\begin{equation}
\label{eq: 4.46}
\ll \frac{T_{k}(\log \log R)^{2}}{\log R}\frac{r}{\phi(r)•}\frac{(WB)^{k}\mathfrak{S}_{WB}(\mathcal{L})}{\phi(WB)^{k}•}\left(\frac{\phi(WBr)}{WBr}\right)^{m+1}\prod_{p\nmid WBr}\left(1+\frac{m+1}{p-\omega(p)}\right)\left(1-\frac{1}{p•}\right)^{m+1}(\log R)^{m+1}H'',
\end{equation}
where $H''$ is the integral in $dt_{i_{1}}\cdots dt_{i_{m+1}}$ of $F_{2}$, which is evaluated in $(\log r_{i})/(\log R)$ in every positions $i\neq i_{1},...,i_{m+1}$ and $t_{i_{j}}$ otherwise.
Clearly, \eqref{eq: 4.46} is equal to 
\begin{equation}
\label{eq: 4.47}
T_{k}(\log \log R)^{2}(\log R)^{m}\left(\frac{\phi(r)}{r}\right)^{m}\left(\frac{WB}{\phi(WB)}\right)^{k-m-1}\mathfrak{S}_{WB}(\mathcal{L})H'',
\end{equation}
because $(WB,r)=1$ and the product is $\ll 1$, since it is over primes $p>2k^{2}$. This concludes the estimate of the error term in \eqref{eq: 4.35} and the proof of the Lemma \ref{lem: 4.3}.
\end{proof}
Now we return to (4.30) and we firstly estimate the second sum which, after inserting (4.32), becomes:
\begin{equation}
\label{eq: 4.48}
\ll \left(\frac{WB}{\varphi(WB)}\right)^{2(k-m-1)}\mathfrak{S}_{WB}^{2}(\mathcal{L})(\log R)^{2(m+1)}\sum_{\substack{\textbf{r}\in\mathcal{D}_{k}\\ r_{i_{j}}=1, \forall j=1,...,m+1}}\frac{(H')^{2}}{\prod_{p|r}(p+O(k))}
\end{equation}
$$+\left(\frac{WB}{\varphi(WB)}\right)^{2(k-m-1)}\mathfrak{S}_{WB}^{2}(\mathcal{L})(\log R)^{2m}T^{2}_{k}(\log\log R)^{4}\sum_{\substack{\textbf{r}\in\mathcal{D}_{k}\\ r_{i_{j}}=1, \forall j=1,...,m+1}}\frac{(H^{''})^{2}}{\prod_{p|r}(p+O(k))},$$
because 
$$\frac{1}{h(r)^{2}•}\bigg(\frac{\varphi(r)}{r•}\bigg)^{2m}=\bigg(\frac{\varphi(r)}{r•}\bigg)^{2m}\prod_{\substack{p|r\\p\nmid W_{i_{1}}}}\bigg(\frac{p}{p-1•}\bigg)^{2}\cdots\prod_{\substack{p|r\\p\nmid W_{i_{m}}}}\bigg(\frac{p}{p-1•}\bigg)^{2}\leq 1.$$
We start working with the first sum in (4.48). By \cite[Lemma 8.4]{M2} we obtain the following bound
\begin{equation}
\label{eq: 4.49}
\ll L_{k}(F_{1})(\log R)^{k-m-1}\prod_{p\nmid WB}\bigg(1+\frac{\omega(p)-m-1}{p+O(k)•}\bigg)\bigg(1-\frac{1}{p•}\bigg)^{k-m-1}\prod_{p|WB}\bigg(1-\frac{1}{p•}\bigg)^{k-m-1},
\end{equation}
and we note that the two products over the primes are respectively $\ll\mathfrak{S}_{WB}^{-1}(\mathcal{L})$ and $\left(\frac{\varphi(WB)}{WB•}\right)^{k-m-1}$. Here, we define
$$L_{k}(F_{1})=\idotsint\limits_{\{t_{i}\geq 0:\ i\not\in \{i_{1},...,i_{m+1}\}\}}\biggl(\idotsint\limits_{t_{i_{1}},...,t_{i_{m+1}}\geq 0}F_{1}(t_{1},...,t_{k})dt_{i_{1}}\cdots dt_{i_{m+1}}\biggr)^{2}dt_{1}\hat{\cdots} dt_{k},$$
where $dt_{1}\hat{\cdots} dt_{k}$ means that we are differentiating only by the $dt_{i}$'s for $i\neq i_{1},...,i_{m+1}$. By the symmetry of $F_{1}$ with respect to each variable, we may rewrite $L_{k}(F_{1})$ in the following simpler form
$$L_{k}(F_{1})=\int_{0}^{\infty}\cdots \int_{0}^{\infty}\biggl(\int_{0}^{\infty}\cdots \int_{0}^{\infty}F_{1}(t_{1},...,t_{k})dt_{1}\cdots dt_{m+1}\biggr)^{2}dt_{m+2}\cdots dt_{k}.$$
Similarly, but this time using the little variation of \cite[Lemma 8.4]{M2} mentioned above, the second sum in \eqref{eq: 4.48} may be bounded by
\begin{equation}
\label{eq: 4.50}
\ll L_{k}(F_{2})(\log R)^{k-m-1}\mathfrak{S}_{WB}^{-1}(\mathcal{L})\left(\frac{\varphi(WB)}{WB•}\right)^{k-m-1},
\end{equation}
where as before we let 
$$L_{k}(F_{2})=\int_{0}^{\infty}\cdots \int_{0}^{\infty}\biggl(\int_{0}^{\infty}\cdots \int_{0}^{\infty}F_{2}(t_{1},...,t_{k})dt_{1}\cdots dt_{m+1}\biggr)^{2}dt_{m+2}\cdots dt_{k}.$$
Therefore, the second sum in \eqref{eq: 4.30} is
\begin{equation}
\label{eq: 4.51}
\ll (\log R)^{k+m+1}\mathfrak{S}_{WB}(\mathcal{L})\left(\frac{WB}{\phi(WB)}\right)^{k-m-1}\left( L_{k}(F_{1})+\frac{T_{k}^{2}(\log \log R)^{4}}{(\log R)^{2}•}L_{k}(F_{2})\right).
\end{equation}
Arguing in the same way as in \cite[Lemma 8.6]{M2}, we find the following estimates
$$L_{k}(F_{1})\ll \frac{(\log k)^{m+1}}{k^{m+1}}I_{k}(F)$$
and 
$$L_{k}(F_{2})\ll k^{2}L_{k}(F_{1})\ll \frac{(\log k)^{m+1}}{k^{m-1}}I_{k}(F).$$
In conclusion, we obtain
\begin{equation}
\label{eq: 4.52}
\sum_{\substack{\textbf{r}\in\mathcal{D}_{k}\\ r_{i_{j}}=1, \forall j=1,...,m+1}}\frac{(y_{\textbf{r}}^{(m)})^{2}}{\prod_{p|r}(p+O(k))h(r)^{2}}\ll \frac{(\log k)^{m+1}}{k^{m+1}}I_{k}(F)(\log R)^{k+m+1}\mathfrak{S}_{WB}(\mathcal{L})\left(\frac{WB}{\varphi(WB)}\right)^{k-m-1}.
\end{equation} 
Before going on, let us explicit the constant $y_{\textbf{r}_{0}}$. To this aim we compute the sum $\sum_{\textbf{r}_{0}\in\mathcal{E}_{m}}\mu^{2}(r_0)/\phi(r_0)$.

By a trivial application of \cite[Lemma 8.4]{M2}, we get
\begin{equation}
\label{eq: 4.53}
\sum_{\textbf{r}_{0}\in\mathcal{E}_{m}}\frac{\mu^{2}(r_0)}{\phi(r_0)}\geq\sum_{(r_{0_{j}}, W_{i_{j}})=1, \forall j}\frac{\mu^{2}(r_{0_{1}}\cdots r_{0_{m}})}{\phi(r_{0_{1}}\cdots r_{0_{m}})}\prod_{j=1}^{m}\textbf{1}_{[0,1]}\left(\frac{\log r_{0_{j}}}{\log R^{1/3m}}\right)
\end{equation}
$$\gg \left(\frac{\log R}{3m}\right)^{m}\prod_{p|(W_{i_{1}},...,W_{i_{m}})}\left(1-\frac{1}{p•}\right)^{m}\prod_{p:\ n(p)\geq 1}\left(1+\frac{n(p)}{p-1}\right)\left(1-\frac{1}{p•}\right)^{m},$$
where $n(p)=\#\{j\in\{1,...,m\}: p\nmid W_{i_{j}}\}$. We indicate the last product in \eqref{eq: 4.53} as $P(W_{i_{1}},...,W_{i_{m}})$ and we note that it converges, since for all the primes $p\nmid \prod_{i\neq j}(h_{j}-h_{i})$ we have $p\nmid W_{i}$, for every $i=1,...,k$. Anyway, it seems difficult to prove that $P(W_{i_{1}},...,W_{i_{m}})\gg 1$, even if we believe this is the case.

Now we can choose $y_{\textbf{r}_{0}}$ as 
$$y_{\textbf{r}_{0}}=(3m)^{m}\left(\frac{(W_{i_{1}},...,W_{i_{m}})}{\phi((W_{i_{1}},...,W_{i_{m}}))}\right)^{m}P(W_{i_{1}},...,W_{i_{m}})^{-1},$$
so that we immediately find $\tilde{\lambda}_{(1,...,1)}\gg (\log R)^{m}$. We call the value of $y_{\textbf{r}_{0}}$ as $\bar{y}$. We note that for small values of $m$ the expression of $\bar{y}$ is easy and computable. For example, one can prove that
$$\bar{y}\ll \left\{ \begin{array}{ll}
        \frac{W_{i_{1}}}{\phi(W_{i_{1}})} & \mbox{if $m=1$};\\
        \frac{(W_{i_{1}},W_{i_{2}})}{\phi((W_{i_{1}},W_{i_{2}}))}\frac{[W_{i_{1}},W_{i_{2}}]}{\phi([W_{i_{1}},W_{i_{2}}])} &     \mbox{if $m=2$}. \end{array} \right. $$
Regarding the first sum in \eqref{eq: 4.30}, by $m$ applications of \cite[Lemma 8.4]{M2}, we find:
\begin{equation}
\label{eq: 4.54}
\sum_{\textbf{r}_{0}\in\mathcal{E}_{m}}\frac{y_{\textbf{r}_{0}}^{2}\mu^{2}(r_{0})}{\prod_{p|r_{0}}(p+O(k))(1-\frac{1}{p•})^{2}}\leq \bar{y}^{2}\prod_{j=1}^{m}\sum_{(r_{0_{j}}, W_{i_{j}})=1}\frac{\mu^{2}(r_{0_{j}})}{\prod_{p|r_{0_{j}}}(p+O(k))}\textbf{1}_{[0,1]}\left(\frac{\log r_{0_{j}}}{\log R}\right)
\end{equation}
$$\ll\bar{y}^{2}(\log R)^{m}\frac{\phi(W_{i_{1}})}{W_{i_{1}}}\cdots\frac{\phi(W_{i_{m}})}{W_{i_{m}}}\prod_{p\nmid W_{i_{1}}}\bigg(1+\frac{1}{p+O(k)}\bigg)\bigg(1-\frac{1}{p•}\bigg)\cdots\prod_{p\nmid W_{i_{m}}}\bigg(1+\frac{1}{p+O(k)}\bigg)\bigg(1-\frac{1}{p•}\bigg).$$
We note that each product over primes, in the last line of \eqref{eq: 4.54}, is bounded. Moreover, we can use the following estimate
$$\frac{\phi(W_{i_{1}})}{W_{i_{1}}}\cdots\frac{\phi(W_{i_{m}})}{W_{i_{m}}}\leq \left(\frac{\varphi(WB)}{WB}\right)^{m}.$$
Thus, \eqref{eq: 4.54} reduces to 
\begin{equation}
\label{eq: 4.55}
\leq C_{1}^{m}\bar{y}^{2}(\log R)^{m}\left(\frac{\varphi(WB)}{WB}\right)^{m},
\end{equation}
for a suitable constant $C_{1}>0$. Using the estimates \eqref{eq: 4.52} and \eqref{eq: 4.55} we find
\begin{equation}
\label{eq: 4.56}
\sum_{\substack{\textbf{r} \in\mathcal{D}_{k}\\r_{i_{j}}=1,\forall j\\ \textbf{r}_{0}\in \mathcal{E}_{m}\\ (r,r_{0})=1 }}\frac{y_{\textbf{r},\textbf{r}_{0}}^{2}}{\prod_{p|rr_{0}}(p+O(k))}\leq C_{2}^{m}\bar{y}^{2}\left(\frac{\log k}{k}\right)^{m+1}I_{k}(F)(\log R)^{k+2m+1}\mathfrak{S}_{WB}(\mathcal{L})\left(\frac{WB}{\phi(WB)}\right)^{k-2m-1},
\end{equation}
for a certain $C_{2}>0$. Collecting the results, we get that \eqref{eq: 4.5} can be estimated by
\begin{equation}
\label{eq: 4.57}
\leq C_{3}^{m} \bar{y}^{2}\left(\frac{\log k}{k•}\right)^{m+1}\frac{I_{k}(F)}{\varphi(W)•}x(\log x)^{k}\mathfrak{S}_{WB}(\mathcal{L})\left(\frac{WB}{\varphi(WB)}\right)^{k-2m-1},
\end{equation}
for a suitable constant $C_{3}>0$. This is also the final bound of \eqref{eq:4.3}. In fact, by Lemma \ref{lem 4.1} and by using \cite[Lemma 8.1 (i)]{M2} and \cite[Lemma 8.6]{M2} to take into  consideration the size of $\mathfrak{S}_{WB}(\mathcal{L})$ and of $I_{k}(F)$, we easily see that the error term coming from \eqref{eq:4.7} is negligible compared to \eqref{eq: 4.57}.

Finally, we recall that we have to sum the bound \eqref{eq: 4.57} over all the residue classes $v_{0}\pmod{W}$, which is equivalent to multiply it by
$$\varphi_{\omega}(W)=W\prod_{p|W}\bigg(1-\frac{\omega(p)}{•p}\bigg)\bigg(1-\frac{1}{p•}\bigg)^{-k}\left(\frac{\varphi(W)}{W}\right)^{k}=W\mathfrak{S}_{WB}(\mathcal{L})^{-1}\mathfrak{S}_{B}(\mathcal{L})\left(\frac{\varphi(W)}{W}\right)^{k}.$$
In this way, we can estimate our main sum in \eqref{eq: 1.4} with
\begin{equation}
\label{eq: 4.58}
\leq C_{4}^{m}\bar{y}^{2}\left(\frac{\log k}{k•}\right)^{m+1}I_{k}(F)x(\log x)^{k}\bigg(\frac{\varphi(W)•}{W}\bigg)^{k-1}\left(\frac{WB}{\varphi(WB)}\right)^{k-2m-1}\mathfrak{S}_{B}(\mathcal{L}),
\end{equation}
for a certain $C_{4}>0$. When $h_{1},...,h_{k}\leq k^{2}$ (in fact, it suffices that for every prime $p\nmid WB$ happens that $p\nmid (h_{j}-h_{i})$, for all $i\neq j$) then all the factors $W_{i}=WB$ and consequently $\bar{y}$ reduces to 
$$\bar{y}\ll(3m)^{m}\left(\frac{WB}{\phi(WB)}\right)^{m}.$$
This leads to a simplification of the expression \eqref{eq: 4.58} of the form
\begin{equation}
\leq C^{m}m^{m}\left(\frac{\log k}{k•}\right)^{m+1}I_{k}(F)x(\log x)^{k}\left(\frac{B}{\varphi(B)}\right)^{k-1}\mathfrak{S}_{B}(\mathcal{L}),
\end{equation}
for a suitable $C>0$, since $(W,B)=1$. The proofs of Theorem 1.1 and Corollary 1.2 are completed.
\\

\end{document}